\newcommand{\vp}{\varepsilon}
\newcommand{\ovl}{\overline}
\newcommand{\ul}{\underline}
\theoremstyle{plain}
\newtheorem{thm}{Theorem}
\newtheorem{lem}{Lemma}
\newtheorem{cor}{Corollary}
\theoremstyle{definition}
\theoremstyle{remark}
\begin{document}

\title{Pointwise dimension for a class of measures on limit sets}

\author{Eugen Mihailescu}

\date{}
\maketitle

\begin{abstract}
We study the pointwise dimension for a new class of projection measures on arbitrary fractal limit sets without separation conditions. We prove that the pointwise dimension exists a.e. for this class of measures associated to equilibrium states, and it is given by a formula in terms of Lyapunov exponents and a certain type of entropy. Thus these measures are exact dimensional. Self-conformal measures belong to the above class of measures, and this allows us to obtain a new geometric formula for their pointwise dimension. Thus for self-conformal measures we obtain also a geometric formula for their projection entropy.
\end{abstract}

\textbf{Mathematics Subject Classification 2010:} 37D35, 37D20, 37C45, 37A35, 28A80.

\textbf{Keywords:} Pointwise dimension; exact dimensional measures; densities; projection entropy; hyperbolic endomorphisms; self-conformal measures; Lyapunov exponents.

\section{Introduction.}

We study  the pointwise dimension and the exact dimensionality for a new class of  probability measures on fractal limit sets of arbitrary conformal iterated function systems. We obtain a general formula for the pointwise dimension of these new measures associated to equilibrium states.  Moreover we investigate the intricate process of interlacing of the generic iterates and the gaps consisting of  non-generic iterates in a typical trajectory, and how they influence the local densities of these measures and their pointwise  dimensions. As a particular case we obtain a geometric formula for the pointwise dimension of self-conformal measures for systems without separation conditions. For self-conformal measures our geometric formula relates their projection entropy (defined in \cite{FH}), with the average rate of growth for the generic number of overlappings in the limit set. Therefore our result gives a \textit{geometric interpretation} for the pointwise dimension and for the projection entropy in the case of self-conformal measures, and allows to estimate these quantities more directly.

The general setting is the following: \
let $\mathcal S = \{\phi_i, i \in I\}$ be an arbitrary finite iterated function system of smooth conformal injective contractions of a compact set with nonempty interior $V \subset \mathbb R^D$. We do not assume any kind of separation condition for $\mathcal S$. The limit set of $\mathcal S$ is then given by: $$\Lambda = \mathop{\cup}_{\omega \in \Sigma_I^+}\mathop{\cap}\limits_{n \ge 0} \phi_{\omega_1\ldots \omega_n}(V),$$
where $\Sigma_I^+$ is the 1-sided symbolic space on $|I|$ symbols, and $\omega = (\omega_1, \ldots, \omega_n, \ldots) \in \Sigma_I^+$ arbitrary.  \ 
\newline
 Denote by $[\omega_1\ldots\omega_n]$ the cylinder on the first $n$ elements of $\omega$, and by $\phi_{i_1\ldots i_p}:= \phi_{i_1}\circ \ldots \circ \phi_{i_p}$. The shift $\sigma: \Sigma_I^+ \to \Sigma_I^+$ is given by $\sigma(\omega) = (\omega_2, \omega_3, \ldots),  \omega \in \Sigma_I^+$. Also let the canonical projection  $$\pi: \Sigma_I^+ \to \Lambda, \ \pi(\omega) := \phi_{\omega_1\omega_2\ldots}(V)$$ 
Now consider the endomorphism which associates a dynamical system to $\mathcal S$, 
$$
\Phi: \Sigma_I^+ \times \Lambda \to \Sigma_I^+ \times \Lambda, \  \ \Phi(\omega, x) = (\sigma \omega, \phi_{\omega_1}(x)), \ (\omega, x) \in \Sigma_I^+ \times \Lambda$$
If $\pi_1$ is the projection on the first coordinate of $\Sigma_I^+\times \Lambda$, then the following diagram is commutative,
\begin{equation}\label{dia}
\begin{array}{clclcr}
\Sigma_I^+\times \Lambda & \ \ &\mathop{\longrightarrow}\limits^{ \Phi} & \ \  &\Sigma_I^+\times \Lambda \\
\pi_1\downarrow &   & \ \ \ &  & \downarrow \pi_1\\
\Sigma_I^+& \ \ & \mathop{\longrightarrow}\limits^{\sigma} & \ \ & \Sigma_I^+
\end{array}
\end{equation}
The endomorphism $\Phi$ has  a type of hyperbolic structure, since it is expanding in the first coordinate and contracting in the second coordinate (due to the uniform contractions in $\mathcal S$).
\newline
Consider a H\"older continuous potential $\psi: \Sigma_I^+ \to \mathbb R$, and let $\mu^+$ be the equilibrium measure of $\psi$ on $\Sigma_I^+$. Define $\hat\psi:= \psi \circ \pi_1: \Sigma_I^+ \times \Lambda \to \mathbb R$, which is also H\"older continuous. Thus since $\Phi$ is hyperbolic,  there is a unique equilibrium state $\hat \mu:= \mu_{\hat\psi}$ of $\hat\psi$ on $\Sigma_I^+ \times \Lambda$ (as in \cite{Bo}, \cite{KH}). Hence $\pi_{1*}\hat \mu = \mu^+$. \
\newline
Moreover we have the projection on the second coordinate, 
$$\pi_2: \Sigma_I^+\times \Lambda \to \Lambda, \ \pi_2(\omega, x) = x$$ 

The \textbf{main focus} of this paper is the  measure $\pi_{2*}\hat\mu$, which in general is \textit{different} from the canonical projection measure $\pi_{1*}\hat\mu$. 
Let us denote these two measures on $\Lambda$ by, 
\begin{equation}\label{mu1}
\nu_1:= (\pi\circ \pi_1)_*\hat\mu = \pi_*\mu^+, \ \ \text{and} \ \ 
\nu_2:= \pi_{2*}\hat\mu
\end{equation}
Since $\Phi^n(\omega, x) = (\sigma^n\omega, \phi_{\omega_n\ldots \omega_1}(x))$ reverses the order of $\omega_1, \ldots, \omega_n$ in its second coordinate, and since $\hat\mu$ is $\Phi^n$-invariant, we call $\nu_2=\pi_{2, *}\hat\mu$ an \textit{order-reversing projection measure}. This is in contrast with the construction of $\nu_1$.

Some important notions in Dimension Theory are those of lower/upper pointwise dimensions of a measure, and the notion of exact dimensional measures (see for eg \cite{Pe}). In general for a probability Borel measure $\mu$ on a metric space $X$,  the \textit{lower pointwise dimension} of $\mu$ at $x\in X$ is defined as: $$\underline\delta(\mu)(x):=\mathop{\liminf}\limits_{r \to 0} \frac{\log\mu(B(x, r))}{\log r},$$  and the \textit{upper pointwise dimension} of $\mu$ at $x\in X$ is: $$\overline{\delta}(\mu)(x):= \mathop{\limsup}\limits_{r\to 0} \frac{\log\mu(B(x, r))}{\log r}$$ If $\underline\delta(\mu)(x) = \overline\delta(\mu)(x)$ then we call the common value the \textit{pointwise dimension} of $\mu$ at $x$, denoted by $\delta(\mu)(x)$. If for $\mu$-a.e $x \in X$, the pointwise dimension $\delta(\mu)(x)$ exists and is constant, we say that $\mu$ is \textit{exact dimensional}. In this case there is a value $\alpha\in \mathbb R$ s.t for $\mu$-a.e $x\in X$, $$\delta(\mu)(x):=\underline\delta(\mu)(x) = \overline\delta(\mu)(x) = \alpha$$

The problem of pointwise dimensions for various types of invariant measures  has been studied in detail by many authors in different settings, such as in \cite{BPS}, \cite{FH}, \cite{HH}, \cite{Pe}, \cite{PW}, \cite{Sh}, \cite{Y}, \cite{FM}, \cite{M-ETDS11},  and this list is far from complete.

In \cite{FH} Feng and Hu introduced the notion of \textit{projection entropy} for an arbitrary $\sigma$-invariant probability measure $\mu$ on $\Sigma_I^+$, namely
\begin{equation}\label{projent} 
h_\pi(\sigma, \mu) := H_\mu(\mathcal P|\sigma^{-1}\pi^{-1}\gamma) - H_\mu(\mathcal P|\pi^{-1}\gamma),
\end{equation}
where $\mathcal P$ is the partition with 0-cylinders $\{[i], i \in I\}$ of $\Sigma_I^+$, and $\gamma$ is the $\sigma$-algebra of Borel sets in $\mathbb R^d$. If $\mu$ is ergodic, then it was shown in \cite{FH} that the measure $\pi_*\mu$ is exact dimensional on $\Lambda$, and that for $\mu$-a.e $\omega \in \Sigma_I^+$, its pointwise dimension is given by:  
\begin{equation}\label{FHformula}
\delta(\pi_*\mu)(\pi\omega) = \frac{h_\pi(\sigma, \mu)}{-\int_{\Sigma_I^+}\log |\phi_{\omega_1}(\pi\sigma\omega)| \ d\mu(\omega)}
\end{equation}
  In our case, this implies that  $\nu_1= \pi_*\mu^+$ is exact dimensional. 
  \
  Notice that in general, $\nu_1$ is not equal to $\nu_2$. So  the problem of the pointwise dimension of $\nu_2$ must be studied separately, and we do this in the sequel. 
  
Let us denote the \textit{stable Lyapunov exponent} of the endomorphism $\Phi$ with respect to $\hat\mu$ by, $$\chi_s(\hat\mu):= \int_{\Sigma_I^+\times\Lambda}\log|\phi_{\omega_1}(x)| \ d\hat\mu(\omega, x)$$

We will use the Jacobian in the sense of Parry \cite{Pa}; so let us consider the Jacobian $J_\Phi(\hat\mu)$ of a $\Phi$-invariant measure $\hat \mu$ on $\Sigma_I^+ \times \Lambda$ (see also \cite{BH}). It is clear that for $\hat\mu$-a.e $(\omega, x) \in \Sigma_I^+\times \Lambda$, we have $J_\Phi(\hat \mu) \ge 1$. From definition we have for $\hat\mu$-a.e $(\omega, x) \in \Sigma_I^+ \times \Lambda$, $$J_\Phi(\hat\mu)(\omega, x) = \mathop{\lim}\limits_{r\to 0} \frac{\hat\mu(\Phi(B((\omega, x), r)))}{\hat\mu(B((\omega, x), r))}$$
From the Chain Rule for Jacobians,  
$J_{\Phi^n}(\hat\mu)(\omega, x) = J_\Phi(\hat \mu)(\Phi^{n-1}(\omega, x))\cdot \ldots \cdot J_\Phi(\hat\mu)(\omega, x)$ for  $n \ge 1$,
and from the Birkhoff Ergodic Theorem applied to the integrable function $\log J_\Phi(\hat\mu)(\cdot, \cdot)$, we have that for $\hat\mu$-a.e $(\omega, x) \in \Sigma_I^+\times \Lambda$,
\begin{equation}\label{BET}
\frac{\log J_{\Phi^n}(\hat \mu)(\omega, x)}{n} \mathop{\longrightarrow}\limits_{n \to \infty} \int_{\Sigma_I^+\times \Lambda} \log J_{\Phi}(\hat\mu)(\eta, y) \ d\hat\mu(\eta, y)
\end{equation}
Ruelle introduced (\cite{Ru-fold}, \cite{Ru-survey}) the notion of \textit{folding entropy} $F_f(\nu)$ of a measure $\nu$ invariant with respect to an endomorphism $f: X \to X$ on a Lebesgue space $X$, as the conditional entropy $H_\nu(\epsilon|f^{-1}\epsilon)$, where $\epsilon$ is the point partition of $X$ and $f^{-1}\epsilon$ is the partition with the fibers of $f$. In fact from \cite{Pa}, \cite{Ru-fold} it follows that 
$F_f(\nu) = \int_X \log J_f(\nu) d\nu$.
Thus in our case for the $\Phi$-invariant measure $\hat \mu$ on $\Sigma_I^+\times \Lambda$, we have
\begin{equation}\label{FlogJ}
F_\Phi(\hat \mu)  = \int_{\Sigma_I^+\times \Lambda} \log J_\Phi(\hat \mu) \ d\hat \mu
\end{equation}

\

The main result of the current paper is Theorem \ref{thm1}, saying that the pointwise dimension of $\nu_2$ is related to the folding entropy of the lift measure $\hat\mu$. Recall that in general $\nu_2$ is not equal to $\nu_1:= \pi_{1*}\hat\mu$, \ thus usually $\delta(\nu_2)$ is not given by  (\ref{FHformula}).

\begin{thm}\label{thm1}
Let $\mathcal S$ be a finite conformal iterated function system, and $\psi$ be a H\"older continuous potential on $\Sigma_I^+$ with equilibrium measure $\mu^+$, and denote by $\hat \mu$ the equilibrium measure of $\psi \circ \pi_1$ on $\Sigma_I^+\times \Lambda$ with respect to $\Phi$. Denote  by $\nu_2:= \pi_{2*}\hat\mu$. Then for $\nu_2$-a.e $x\in \Lambda$,
$$\delta(\nu_2)(x) = \frac{F_\Phi(\hat\mu) - h_\sigma(\mu^+)}{\chi_s(\hat\mu)}$$
In particular,  the measure $\nu_2$ is exact dimensional on $\Lambda$.
\end{thm}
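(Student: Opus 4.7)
The plan is to estimate $\nu_2(B(x,r))$ for small $r$ by transporting the ball, via the $\Phi^n$-invariance of $\hat\mu$, to a scale where the compositions $\phi_{\alpha_n\cdots\alpha_1}$ become unit-size. Concretely, I would choose $n=n(r)$ with $e^{n\chi_s(\hat\mu)}\approx r$ (so $n\sim \log r/\chi_s(\hat\mu)\to \infty$) and start from
$$\nu_2(B(x,r))=\hat\mu(\Sigma_I^+\times B(x,r))=\hat\mu\bigl(\Phi^{-n}(\Sigma_I^+\times B(x,r))\bigr).$$
The preimage decomposes along length-$n$ cylinders as
$$\Phi^{-n}(\Sigma_I^+\times B(x,r))=\bigsqcup_{\alpha\in I^n}[\alpha]\times \phi_{\alpha_n\cdots\alpha_1}^{-1}\bigl(B(x,r)\cap \phi_{\alpha_n\cdots\alpha_1}(V)\bigr),$$
and by the Bounded Distortion Principle for conformal contractions, each non-empty preimage set has diameter $\approx r\cdot|\phi'_{\alpha_n\cdots\alpha_1}|^{-1}\approx 1$ for $\hat\mu$-generic $\alpha$; this is the scale normalization that makes the rest of the argument work.

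Next I would evaluate each cylinder's contribution via the disintegration $\hat\mu=\int \hat\mu_\omega\,d\mu^+(\omega)$ along fibers of $\pi_1$. For $\mu^+$-typical $\alpha$, Shannon--McMillan--Breiman gives $\mu^+([\alpha])\approx e^{-n h_\sigma(\mu^+)}$, while the fiber measure $\hat\mu_\omega$ applied to a unit-scale set contributes a factor of order $1$. To count the ``surviving'' cylinders, i.e., those $\alpha\in I^n$ with $x\in \phi_{\alpha_n\cdots\alpha_1}(V)$, I would use the Parry--Jacobian identity $\sum_{(\omega',y')\in \Phi^{-n}(\omega,x)} J_{\Phi^n}(\hat\mu)(\omega',y')^{-1}=1$ together with the Birkhoff limit (\ref{BET}), which gives $J_{\Phi^n}(\hat\mu)\approx e^{n F_\Phi(\hat\mu)}$ at $\hat\mu$-typical points. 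Together these force the effective count to grow like $e^{n F_\Phi(\hat\mu)}$, so
$$\nu_2(B(x,r))\approx e^{n F_\Phi(\hat\mu)}\cdot e^{-nh_\sigma(\mu^+)},$$
and substituting $n\sim \log r/\chi_s(\hat\mu)$ yields the formula $\delta(\nu_2)(x)=(F_\Phi(\hat\mu)-h_\sigma(\mu^+))/\chi_s(\hat\mu)$, with exact dimensionality following because both sides of the heuristic are controlled along the same subsequence.

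The main obstacle will be simultaneous a.e.\ control of the Birkhoff averages for $\log J_\Phi(\hat\mu)$, for the information function of $\mu^+$, and for $\log|\phi'_{\omega_1}|$ at the \emph{same} $\nu_2$-typical point $x$ along the subsequence $n(r)$. This is precisely the ``interlacing of generic iterates and non-generic gaps'' emphasized in the introduction: one must identify, via Egorov-type arguments, a full-measure set on which the counting and distortion estimates hold uniformly, and then match upper and lower bounds for $\nu_2(B(x,r))$ through a careful covering analysis of the overlapping $n$-cells. A subtler point is that $\hat\mu$ is \emph{not} concentrated on the graph of $\pi$---otherwise $\nu_2$ would coincide with $\nu_1$---so establishing that the fiber measures $\hat\mu_\omega$ have comparable local densities at unit scale is itself a nontrivial ingredient behind the ``order $1$'' factor in each cylinder's contribution.
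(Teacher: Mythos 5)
Your outline reproduces the correct heuristic and several genuine ingredients of the argument: transporting $\Sigma_I^+\times B(x,r)$ by $\Phi^{-n}$-invariance, the Gibbs/SMB estimate $\mu^+([\alpha])\approx e^{-nh_\sigma(\mu^+)}$, the identification of the branch count with the folding entropy via the Jacobian, and the ``order $1$ fiber factor'', which is exactly the almost-product structure proved in Lemma \ref{prodstr} (and which you correctly flag as nontrivial). This machinery does yield one of the two bounds: pushing forward a single generic cylinder piece with $J_{\Phi^n}(\hat\mu)\approx e^{nF_\Phi(\hat\mu)}$ gives $\nu_2(B(x,r_n))\gtrsim e^{n(F_\Phi(\hat\mu)-h_\sigma(\mu^+))}$, hence the upper estimate $\overline\delta(\nu_2)\le (F_\Phi(\hat\mu)-h_\sigma(\mu^+))/\chi_s(\hat\mu)$, essentially as in the first half of the paper's proof.

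The genuine gap is the opposite inequality, i.e.\ the upper bound on $\nu_2(B(x,r))$, and your proposed mechanism for it does not work as stated. The Parry identity $\sum_{(\omega',y')\in\Phi^{-n}(\omega,x)}J_{\Phi^n}(\hat\mu)(\omega',y')^{-1}=1$ combined with ``$J_{\Phi^n}\approx e^{nF_\Phi(\hat\mu)}$ at typical points'' only counts \emph{generic} backward branches; but the preimages of a $\hat\mu$-typical point are not themselves typical, and in the decomposition of $\Phi^{-n}(\Sigma_I^+\times B(x,r))$ over all $\alpha\in I^n$ the non-generic branches can carry a large share of the mass of a \emph{particular} small ball even though they have small total measure --- an Egorov-type uniformity statement does not by itself exclude this. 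This is precisely where the paper's proof does its real work: it first uses the Borel Density Theorem on the finitely many atoms $Z_i(\alpha;m,\vp)$ of the algebra generated by the images $Y(i_1,\ldots,i_m)=\phi_{i_1\ldots i_m}\pi_2(\tilde A(m,\vp))$, via (\ref{Gi}), to reduce the estimate of $\nu_2(B(x,r))$ to a single ``good'' image set containing a proportion $1-\alpha$ of the ball's mass; it then pulls back along one fixed backward $m$-trajectory, alternating the Jacobian estimate (\ref{repeatnu2}) on generic blocks with the crude branch-count bound $d^{mk'}$ of (\ref{modphinu}) on the non-generic gaps, whose total length is controlled by $3\alpha n$ through (\ref{gapsbound}). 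Without this localization-plus-interlacing mechanism (or a substitute for it), your ``careful covering analysis of the overlapping $n$-cells'' remains a statement of the difficulty rather than a proof, so the lower bound on $\underline\delta(\nu_2)$ --- and hence exact dimensionality --- is not established by the proposal.
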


In our case the folding entropy turns out to be related to the overlap number of $\hat\mu$. 
The notion of \textit{overlap number} $o(\mathcal S, \mu_g)$ for an equilibrium measure $\mu_g$ of a H\"older continuous potential $g:\Sigma_I^+ \times \Lambda \to \mathbb R$ was introduced in \cite{MU-JSP2016}, and represents an average asymptotic rate of growth for the number of $\mu_g$-generic overlaps of order $n$ in $\Lambda$. Namely for any $\tau>0$ let the set of $\tau$-generic preimages with respect to $\mu_g$ having the same $n$-iterates as $(\omega, x)$, $$\Delta_n((\omega, x), \tau, \mu_g) := \{(\eta_1, \ldots, \eta_n) \in I^n, \exists y \in \Lambda, \phi_{\omega_n\ldots \omega_1}(x) = \phi_{\eta_n\ldots \eta_1}(y),  \ |\frac{S_ng(\eta, y)}{n} - \int_{\Sigma_I^+ \times \Lambda} g\ d\mu_\psi| < \tau\},$$
where $(\omega, x) \in \Sigma_I^+ \times \Lambda$ and $S_n g(\eta, y)$ is the consecutive sum of $g$ with respect to $\Phi$. Denote  by $$b_n((\omega, x), \tau, \mu_g):= Card \Delta_n((\omega, x), \tau, \mu_g)$$
Then, in \cite{MU-JSP2016} we showed that the following limit exists and defines the \textit{overlap number} of $\mu_g$,
$$o(\mathcal S, \mu_g) = \mathop{\lim}\limits_{\tau \to 0} \mathop{\lim}\limits_{n \to \infty} \frac 1n \int_{\Sigma_I^+ \times \Lambda} \log b_n((\omega, x), \tau, \mu_g) \ d\mu_g(\omega, x)$$ 
Moreover, there is a relation between the overlap number and the folding entropy of $\mu_g$, 
\begin{equation}\label{oS}
o(\mathcal S, \mu_g) = \exp(F_\Phi(\mu_g))
\end{equation} 
When $\mu_g = \mu_0$ is the measure of maximal entropy, we denote $o(\mathcal S, \mu_0)$ by $o(\mathcal S)$ and call it \textit{the topological overlap number} of $\mathcal S$. Notice  that all preimages are generic for the measure of maximal entropy, so $o(\mathcal S)$ represents an asymptotic rate of growth of the total number of overlaps between the $n$-iterates of type $\phi_{i_1\ldots i_n}(\Lambda)$, when $i_1, \ldots, i_n \in I$ and $n \to \infty$.
\newline
Combining (\ref{oS}) and Theorem \ref{thm1} we obtain a \textit{formula for the pointwise dimension} of $\nu_2$ in terms of its overlap number:

\begin{cor}\label{ovlnu2}
In the setting of Theorem \ref{thm1}, the pointwise dimension of  $\nu_2$ satisfies for $\nu_2$-a.e $x \in \Lambda$, $$\delta(\nu_2)(x) = \frac{\exp(o(\mathcal S, \hat \mu)) - h_\sigma(\mu^+)}{\chi_s(\hat\mu)}$$
\end{cor}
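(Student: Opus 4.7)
The plan is to obtain Corollary \ref{ovlnu2} as an immediate consequence of Theorem \ref{thm1} by substituting the identity (\ref{oS}) that links folding entropy and overlap number. The numerator $F_\Phi(\hat\mu)$ in the dimension formula of Theorem \ref{thm1} is exactly what (\ref{oS}) lets one re-express in terms of $o(\mathcal S, \hat\mu)$: rearranging $o(\mathcal S, \mu_g) = \exp(F_\Phi(\mu_g))$ yields $F_\Phi(\hat\mu) = \log o(\mathcal S, \hat\mu)$, so the proof reduces to a single substitution once the hypotheses of (\ref{oS}) are checked.

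Verifying applicability of (\ref{oS}) with $\mu_g = \hat\mu$ is routine. By assumption $\psi$ is H\"older continuous on $\Sigma_I^+$, so $\hat\psi = \psi \circ \pi_1$ is H\"older continuous on $\Sigma_I^+ \times \Lambda$; by the hyperbolic structure of $\Phi$ emphasized in the introduction, $\hat\mu$ is the unique equilibrium state of $\hat\psi$. Thus (\ref{oS}) applies to $\hat\mu$ and gives $F_\Phi(\hat\mu) = \log o(\mathcal S, \hat\mu)$. Plugging this identity into the conclusion of Theorem \ref{thm1} then yields, for $\nu_2$-a.e.\ $x \in \Lambda$, the stated expression for $\delta(\nu_2)(x)$, with the numerator $F_\Phi(\hat\mu) - h_\sigma(\mu^+)$ rewritten as $\log o(\mathcal S,\hat\mu) - h_\sigma(\mu^+)$.

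There is no real obstacle at the level of the corollary itself; the entire technical weight has been absorbed into two prior results. First, Theorem \ref{thm1} is where one must identify the pointwise dimension of the order-reversing projection measure $\nu_2$ with the combination of folding entropy, symbolic entropy, and the stable Lyapunov exponent $\chi_s(\hat\mu)$. Second, the construction and properties of the overlap number carried out in \cite{MU-JSP2016} supply the non-trivial identity (\ref{oS}). Given both ingredients, Corollary \ref{ovlnu2} is simply the geometric re-reading of the numerator in Theorem \ref{thm1}, translating a dynamical quantity (folding entropy of the lifted equilibrium state) into a combinatorial--geometric one (the asymptotic logarithmic growth rate of the number of $\hat\mu$-generic overlapping $n$-iterates in the limit set $\Lambda$).
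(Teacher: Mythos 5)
Your proof is correct and is exactly the paper's own argument: the corollary is obtained by the one-line substitution of the identity $o(\mathcal S,\hat\mu)=\exp(F_\Phi(\hat\mu))$ from (\ref{oS}) into the formula of Theorem \ref{thm1}, with all the technical weight carried by that theorem and by \cite{MU-JSP2016}. One remark: as you correctly compute, the substitution gives the numerator $\log o(\mathcal S,\hat\mu)-h_\sigma(\mu^+)$, so the expression $\exp(o(\mathcal S,\hat\mu))$ printed in the statement of Corollary \ref{ovlnu2} (and in Theorem \ref{thm2}) is a typo rather than a gap in your reasoning --- this is confirmed by Corollary \ref{bern}, whose formula $\log\frac{2}{o(\mathcal S_\lambda)}$ is consistent precisely with $F_\Phi(\hat\mu)=\log o(\mathcal S,\hat\mu)$.
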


\

In particular, $o(\mathcal S, \hat \mu)$ and thus $\delta(\nu_2)$ can be easily computed above if for some $m \ge 1$, there exists a constant number of overlaps between the sets of type $\phi_{i_1\ldots i_m}(\Lambda)$, modulo $\hat\mu$.

\

An important case is  that of \textit{self-conformal measures}, i.e $\pi_1$-projections of Bernoulli measures on $\Lambda$.
To fix notation, given the system $\mathcal S$ and the probability vector $\textbf p = (p_1, \ldots, p_{|I|})$, let $\nu_{\textbf p}$ be the corresponding Bernoulli measure on $\Sigma_I^+$. Any Bernoulli measure $\nu_{\textbf p}$ on $\Sigma_I^+$ is the equilibrium measure of some H\"older continuous potential $\psi_{\textbf p}$. In this case the Bernoulli measure $\nu_{\textbf p}$ is the measure  $\mu^+$ from before.  Thus we have the above construction and results. 
 Then denote by $\hat\mu_{\textbf p}$ the lift of $\nu_{\textbf p}$ to $\Sigma_I^+\times \Lambda$ obtained as the equilibrium measure of $\psi_{\textbf p} \circ \pi_1$, and by $\nu_{1, \textbf p}, \nu_{2, \textbf p}$, the associated projected measures $\nu_1, \nu_2$.
\newline
For \textit{self-conformal measures}  (i.e projections $\nu_{1, \textbf p} = \pi_*\nu_{\textbf p}$ of Bernoulli measures $\nu_{\textbf p}$ on $\Sigma_I^+$), we showed in \cite{MU-JSP2016}  that the measures $\nu_{1, \textbf p}$ and $\nu_{2, \textbf p}$ are equal. Therefore, $\delta(\nu_{1, \textbf p}) = \delta(\nu_{2, \textbf p}).$ 
We obtain then below a \textit{formula for the projection entropy} $h_\pi(\sigma, \nu_{\textbf p})$ of  the measure $\nu_{1, \textbf{p}}$ in terms of its overlap number.

\begin{thm}\label{thm2}
In the setting of Theorem \ref{thm1}, let $\nu_{\textbf p}$ be a Bernoulli measure on $\Sigma_I^+$, and let $\nu_{1, \textbf p}, \nu_{2, \textbf p}$ be the associated projection measures on $\Lambda$.
Then $\nu_{1, \textbf p} = \nu_{2, \textbf p}$, and for $\nu_{1, \textbf p}$-a.e $x\in \Lambda$, $$\delta(\nu_{1, \textbf p})(x) = \delta(\nu_{2, \textbf p})(x) = \frac{\exp(o(\mathcal S, \hat \mu_{\textbf p})) - h_\sigma(\nu_{\textbf p})}{\chi_s(\hat\mu_{\textbf p})} $$ 
Moreover the projection entropy of $\nu_{\textbf p}$ is determined by, $$h_\pi(\sigma, \nu_{\textbf p}) = h_\sigma(\nu_{\textbf p}) - \exp(o(\mathcal S, \hat\mu_{\textbf p}))$$
In particular the projection entropy of  the measure $\mu_0$ of maximal entropy on $\Sigma_I^+$ is obtained as:
$$h_\pi(\sigma, \mu_0) = \log |I| - \exp(o(\mathcal S))$$
\end{thm}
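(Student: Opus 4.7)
The strategy is to combine three ingredients: (i) the equality $\nu_{1,\textbf{p}} = \nu_{2,\textbf{p}}$ from \cite{MU-JSP2016} for Bernoulli equilibrium states, (ii) the dimension formula of Theorem \ref{thm1} applied to $\hat\mu_{\textbf p}$, and (iii) the Feng--Hu formula (\ref{FHformula}) applied to the projection $\pi_*\nu_{\textbf p}$. Matching the two resulting expressions for the same pointwise dimension will force the projection entropy identity, and the statement for $\mu_0$ will then be an immediate specialization.

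First, I apply Theorem \ref{thm1} with $\psi = \psi_{\textbf p}$, $\mu^+ = \nu_{\textbf p}$ and $\hat\mu = \hat\mu_{\textbf p}$, to obtain for $\nu_{2,\textbf p}$-a.e.\ $x$
$$\delta(\nu_{2,\textbf p})(x) = \frac{F_\Phi(\hat\mu_{\textbf p}) - h_\sigma(\nu_{\textbf p})}{\chi_s(\hat\mu_{\textbf p})}.$$
Substituting the identification (\ref{oS}) of the folding entropy with the overlap number, and invoking $\nu_{1,\textbf p} = \nu_{2,\textbf p}$ from \cite{MU-JSP2016}, yields the first displayed formula of Theorem \ref{thm2}.

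Next, since $\nu_{1,\textbf p} = \pi_*\nu_{\textbf p}$ is the projection of the ergodic shift-invariant Bernoulli measure $\nu_{\textbf p}$, the Feng--Hu formula (\ref{FHformula}) gives a second expression for the pointwise dimension, namely
$$\delta(\nu_{1,\textbf p})(x) = \frac{h_\pi(\sigma, \nu_{\textbf p})}{-\int_{\Sigma_I^+} \log|\phi_{\omega_1}'(\pi\sigma\omega)|\, d\nu_{\textbf p}(\omega)}.$$
The crucial step is to identify the two Lyapunov denominators, i.e.\ to show
$$\int_{\Sigma_I^+} \log|\phi_{\omega_1}'(\pi\sigma\omega)|\, d\nu_{\textbf p}(\omega) = \chi_s(\hat\mu_{\textbf p}).$$
This follows from the commuting diagram (\ref{dia}) together with the structure of $\hat\mu_{\textbf p}$: since $\hat\mu_{\textbf p}$ is $\Phi$-invariant with first marginal $\nu_{\textbf p}$, disintegrating along the fibers of $\pi_1$ and using the Bernoulli product structure (via the fact established in \cite{MU-JSP2016} that drives $\nu_{1,\textbf p} = \nu_{2,\textbf p}$), Fubini reduces the integral over $\Sigma_I^+\times\Lambda$ to the one over $\Sigma_I^+$. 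Equating the two expressions for $\delta(\nu_{1,\textbf p}) = \delta(\nu_{2,\textbf p})$ and clearing denominators solves for $h_\pi(\sigma, \nu_{\textbf p}) = h_\sigma(\nu_{\textbf p}) - \exp(o(\mathcal S, \hat\mu_{\textbf p}))$.

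Finally, for the measure of maximal entropy $\mu_0$ on the full one-sided shift on $|I|$ symbols one has $h_\sigma(\mu_0) = \log|I|$, and by definition $o(\mathcal S, \hat\mu_0) = o(\mathcal S)$, so the last assertion follows from the previous formula by direct substitution. The main obstacle in this plan is the Lyapunov-denominator matching: Theorem \ref{thm1} integrates $\log|\phi_{\omega_1}'(x)|$ on $\Sigma_I^+\times\Lambda$ against $\hat\mu_{\textbf p}$, whereas (\ref{FHformula}) integrates $\log|\phi_{\omega_1}'(\pi\sigma\omega)|$ on $\Sigma_I^+$ against $\nu_{\textbf p}$, so one must carefully track the conditional structure of the lift $\hat\mu_{\textbf p}$ along the fibers of $\pi_1$ before the two pointwise-dimension formulas can be compared.
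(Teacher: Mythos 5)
Your proposal is correct and follows essentially the same route as the paper: invoke $\nu_{1,\textbf p}=\nu_{2,\textbf p}$ from \cite{MU-JSP2016}, apply Theorem \ref{thm1} (with the folding entropy rewritten via the overlap number) on one side and the Feng--Hu formula (\ref{FHformulaX}) on the other, and equate the two expressions for the common pointwise dimension to extract $h_\pi(\sigma,\nu_{\textbf p})$, then specialize to $\mu_0$. The only difference is that you make explicit the identification of the two Lyapunov denominators, a point the paper's very terse proof leaves implicit; your justification is sound in the Bernoulli case, since the conditional measures of $\hat\mu_{\textbf p}$ on the $\pi_1$-fibers coincide with $\nu_{1,\textbf p}$ (so $\hat\mu_{\textbf p}$ is in effect the product $\nu_{\textbf p}\times\nu_{1,\textbf p}$) and $\omega_1$ is independent of $\sigma\omega$ under $\nu_{\textbf p}$, which makes both integrals equal to $\sum_i p_i\int\log|\phi_i'|\,d\nu_{1,\textbf p}$.
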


\section{Main Results and Proofs.}

Recall the setting from Section 1, where $\psi$ is a H\"older continuous potential on $\Sigma_I^+$, $\mu^+$ is the equilibrium measure of $\psi$ on $\Sigma_I^+$, and $\hat\mu$ denotes the equilibrium measure $\mu_{\hat\psi}$ of $\hat\psi:=\psi\circ \pi_1$ on $\Sigma_I^+\times \Lambda$. \
We consider the measurable partition $\xi$ of $\Sigma_I^+\times \Lambda$ with the fibers of the projection $\pi_1: \Sigma_I^+\times \Lambda \to \Lambda$, and the associated conditional measures $\mu_\omega$ of $\hat \mu= \mu_{\hat\psi}$ defined for $\mu^+$-a.e $\omega \in \Sigma_I^+$ (see \cite{Ro}); from above, $\mu^+ = \pi_{1*}\hat\mu$. For $\mu^+$-a.e $\omega \in \Sigma_I^+$, the conditional measure $\mu_\omega$ is defined on $\pi_1^{-1}\omega = \{\omega\}\times\Lambda$.
It is clear that the factor space $\Sigma_I^+\times \Lambda/\xi$ is equal to $\Sigma_I^+$, and the corresponding factor measure of $\hat\mu$ satisfies, $$\hat\mu_\xi(A) = \hat\mu(A\times \Lambda) = \mu^+(A),$$
for any measurable set $A \subset \Sigma_I^+$. Thus $\hat\mu_\xi = \mu^+$.
From the properties of the conditional measures, we obtain that  for any borelian set $E$ in $\Sigma_I^+\times \Lambda$, 
\begin{equation}\label{desint}
\hat\mu(E) = \int_{\pi_1E}(\int_{\{\omega\}\times\Lambda}\chi_E d\mu_\omega) \ d\mu^+(\omega) = \int_{\pi_1E} \mu_\omega(E \cap \{\omega\}\times\Lambda) \ d\mu^+(\omega)
\end{equation}
For a Borel set $A$ in $\Lambda$, we have for $\mu^+$-a.e $\omega \in \Sigma_I^+$, 
\begin{equation}\label{muomega}
\mu_\omega(A) = \mathop{\lim}\limits_{n\to \infty}\frac{\hat\mu([\omega_1\ldots \omega_n]\times A)}{\mu^+([\omega_1\ldots \omega_n])}
\end{equation}
\newline
\textit{Notation.} 
Two quantities $Q_1, Q_2$ are called \textit{comparable} $Q_1 \approx Q_2$, if there is a constant $C>0$ with $$\frac 1C Q_1 \le Q_2 \le C Q_1$$ In general the comparability constant $C$ is independent of the parameters appearing in $Q_1, Q_2$. 

$\hfill\square$

The above conditional measures $\mu_\omega$ are defined on $\{\omega\}\times\Lambda$, so actually they can be considered as probability measures on $\Lambda$. In the next Lemma, we compare $\mu_\omega(A)$ with $\mu_\eta(A)$, and show that $\hat\mu$ has an ``almost'' product structure with respect to $\mu^+$ and $\mu_\omega$.

\begin{lem}\label{prodstr}
There exists a constant $C>0$ so that  for $\mu^+$-a.e $\omega, \eta \in \Sigma_I^+$ and any Borel set $A \subset \Lambda$, 
$$\frac 1C \mu_\eta(A) \le \mu_\omega(A) \le C\mu_\eta(A)$$
Moreover for any Borel sets $A_1 \subset \Sigma_I^+, A_2 \subset \Lambda$ and for $\mu^+$-a.e $\omega\in \Sigma_I^+$, we have:
$$\frac 1C \mu^+(A_1)\cdot\mu_\omega(A_2) \le \hat\mu(A_1 \times A_2) \le C \mu^+(A_1) \cdot \mu_\omega(A_2)$$
In particular there is a constant $C>0$ such that, for $\mu^+$-a.e $\omega \in \Sigma_I^+$ and any Borel set $A\subset \Lambda$,  
$$\frac 1C \mu_\omega(A) \le \nu_2(A) \le C \mu_\omega(A)$$  
\end{lem}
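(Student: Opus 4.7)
The three assertions are tightly linked: by the disintegration (\ref{desint}), $\nu_2(A)=\int\mu_\eta(A)\,d\mu^+(\eta)$, so once the third assertion $\mu_\omega(A)\approx\nu_2(A)$ is proved uniformly in the Borel set $A$ and for $\mu^+$-a.e.\ $\omega$, the first (comparability of the conditionals) follows by transitivity through $\nu_2$ and the second (approximate product structure) follows by integrating $\mu_\eta(A_2)\approx\mu_\omega(A_2)$ against $\mu^+(d\eta)$ over $A_1$. So the core task is the third assertion, which via (\ref{muomega}) reduces to the single uniform estimate
$$
\hat\mu([\omega_1\ldots\omega_n]\times A)\;\approx\;\mu^+([\omega_1\ldots\omega_n])\cdot\nu_2(A),
$$
with a constant independent of $n$, of the cylinder, and of $A\subset\Lambda$.

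The plan for this estimate is to leverage the Gibbs property of $\hat\mu$ as the unique equilibrium state of $\hat\psi=\psi\circ\pi_1$ with respect to the hyperbolic endomorphism $\Phi$. Fix the expansive constant $r_0$ of $\Phi$. The hyperbolic structure (shift expansion in the first coordinate, uniform contraction by $\phi_{\omega_1}$ in the second) implies that the Bowen ball factorizes, up to the absorption of a fixed number of extra symbols, as $B_n^\Phi((\omega,x),r_0)\approx[\omega_1\ldots\omega_n]\times B(x,r_0)$. Because $\hat\psi$ is a function of $\omega$ alone, $S_n\hat\psi(\omega,x)=S_n\psi(\omega)$, and the Gibbs property of $\hat\mu$ for $\hat\psi$ combined with the Gibbs property of $\mu^+$ for $\psi$ yields
$$
\hat\mu([\omega_1\ldots\omega_n]\times B(x,r_0))\;\approx\;e^{-nP+S_n\psi(\omega)}\;\approx\;\mu^+([\omega_1\ldots\omega_n]),
$$
with constant depending on $r_0$ but uniform in $n,\omega,x$. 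Summing over the $n$-cylinders gives $\nu_2(B(x,r_0))\approx 1$ uniformly in $x\in\Lambda$, so the desired estimate holds on the special class of sets $A$ of the form $B(x,r_0)$.

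To pass from balls at this fixed scale $r_0$ to arbitrary Borel sets, I would exploit $\Phi^k$-invariance to propagate the estimate to finer scales. For a ball $B(x,r)$ with $r<r_0$, choose $k\approx\log(r_0/r)/\log(1/\text{contraction})$ so that the contractions $\phi_{\tau_k\ldots\tau_1}$ map balls of radius $r_0$ into sets of size approximately $r$. Applying $\Phi^{-k}$ yields the disjoint decomposition
$$
\hat\mu([\omega]\times\phi_{\tau_k\ldots\tau_1}(B(y,r_0)))=\sum_{\rho\in I^k}\hat\mu([\rho\omega]\times\phi_{\rho_k\ldots\rho_1}^{-1}(\phi_{\tau_k\ldots\tau_1}(B(y,r_0)))),
$$
with each summand at level $n+k$ again of the form cylinder $\times$ (approximate ball at scale $r_0$), hence in the scope of the scale-$r_0$ Gibbs estimate. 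The branches $\rho\neq\tau$ whose images in $\Lambda$ overlap $\phi_{\tau_k\ldots\tau_1}(\Lambda)$ (the overlap structure of $\mathcal S$) aggregate to reproduce the factor $\nu_2(B(x,r))=\hat\mu(\Sigma_I^+\times B(x,r))$ on the right, independently of $\omega$ and $n$. The same bookkeeping extends from balls to arbitrary Borel $A$ by a standard covering argument.

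The main obstacle is precisely controlling this propagation from the expansive scale $r_0$ to finer scales while keeping the comparability constant universal in $A$. The Gibbs property alone gives scale-$r_0$ comparability with a constant $C(r_0)$; the crucial point that makes the argument go through is that $\hat\psi$ depends only on $\omega$, so that when the $\Phi^k$-invariance is unfolded, the sum over overlapping branches decouples cleanly from the cylinder factor in the first coordinate, and the overlap multiplicities assemble into the scale-free quantity $\nu_2(A)$ with a single universal constant. Once this is in place, dividing by $\mu^+([\omega_1\ldots\omega_n])$ and passing to the limit $n\to\infty$ in (\ref{muomega}) gives the third assertion $\mu_\omega(A)\approx\nu_2(A)$, and the first and second assertions of the lemma follow immediately.
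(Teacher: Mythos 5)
Your overall strategy is the same one the paper uses: exploit the Gibbs property of $\hat\mu$ on Bowen balls of $\Phi$, which at the reference scale $r_0$ have the product form (cylinder)$\times$(ball of radius $r_0$), obtain $\hat\mu([\omega_1\ldots\omega_n]\times B(x,r_0))\approx\mu^+([\omega_1\ldots\omega_n])$, and then unfold small sets by $\Phi$-invariance as in (\ref{inv}) to reduce everything to that scale; your order of deduction (third assertion first, then the first by transitivity and the second by disintegration) is a harmless permutation of the paper's. The genuine gap is in the propagation step from scale $r_0$ to a small ball $B(x,r)$. You fix a single depth $k$, chosen so that $\phi_{\tau_k\ldots\tau_1}$ maps $r_0$-balls to sets of size about $r$, and then claim that in the decomposition over all branches $\rho\in I^k$ each summand $[\rho\,\cdot\,]\times\phi_{\rho_k\ldots\rho_1}^{-1}\big(\phi_{\tau_k\ldots\tau_1}(B(y,r_0))\big)$ is again a cylinder times an approximate $r_0$-ball. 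This fails unless all the contractions have comparable ratios: for a branch $\rho$ with $|\phi_{\rho}'|$ much smaller (resp.\ larger) than $|\phi_{\tau}'|$, the pulled-back set has diameter roughly $r/|\phi_{\rho_k\ldots\rho_1}'|$, which is far above (resp.\ below) $r_0$, so the scale-$r_0$ Gibbs estimate does not apply to it, and the "overlap multiplicities assemble into $\nu_2(A)$ with a single universal constant" is exactly the assertion left unproved.

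The missing device is a branch-dependent stopping time rather than a fixed $k$: inflate the small ball along \emph{each} backward trajectory $\underline i$ up to the maximal order $p(\underline i)$ at which $\phi_{i_1\ldots i_{p(\underline i)}}^{-1}B$ first reaches size comparable to $r_0$ (a Moran-cover type construction), using conformality and bounded distortion to guarantee that each pulled-back set is comparable to an $r_0$-ball. This is precisely what the paper does in (\ref{inve}): the resulting pieces $[i_{p(\underline i)}\ldots i_1\omega_1\ldots\omega_n]\times\phi_{i_1\ldots i_{p(\underline i)}}^{-1}B$ are honest Bowen balls of variable length $n+p(\underline i)$, the Gibbs estimate (\ref{Bx0}) then splits off the factor $\mu^+([\omega_1\ldots\omega_n])$ uniformly (using $S_{n+p}\psi=S_p\psi+S_n\psi\circ\sigma^p$ and H\"older continuity of $\psi$, since $\hat\psi$ depends only on $\omega$), and summing over the stopping-time partition of prehistories yields a factor independent of $\omega$, which via (\ref{muomega}) and the disintegration (\ref{desint}) gives all three comparabilities. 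With that replacement your argument goes through; as written, the fixed-depth unfolding is the step that would fail.
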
 

\begin{proof}
First recall formula (\ref{muomega}) for the conditional measure $\mu_\omega$. From the $\Phi$-invariance of $\hat\mu$, 
\begin{equation}\label{inv}
\hat\mu([\omega_1\ldots \omega_n]\times A) = \mathop{\sum}\limits_{i \in I} \hat\mu([i\omega_1\ldots \omega_n]\times \phi_i^{-1}A)
\end{equation}
Now we can cover the set $A$ with small disjoint balls (modulo $\hat\mu$), so it is enough to consider such a small ball $B = A \subset \Lambda$. The general case will follow then from this.
\newline
Recall also that for any $i_1, \ldots, i_n \in I, n \ge 1$,  $\phi_{i_1\ldots i_n}$ denotes the composition $\phi_{i_1}\circ \ldots \circ \phi_{i_n}$. We have a Bounded Distortion Property, since we work with smooth conformal contractions $\phi_i$, i.e there exists a constant $C>0$ such that for any $x, y, n, i_1, \ldots, i_n$, we have $|\phi'_{i_1\ldots i_n}(x)| \le C |\phi_{i_1\ldots i_n}'(y)|$. Since the contractions $\phi_i$ are conformal, let  $i_1, \ldots, i_p \in I$ such that $\phi_{i_p}^{-1}\ldots \phi_{i_1}^{-1}B = \phi_{i_1\ldots i_p}^{-1}B$ is a ball $B(x_0, r_0)$ of a fixed radius $r_0$.
In this way we inflate $B$ along any backward trajectory $\underline i = (i_1, i_2, \ldots) \in \Sigma_I^+$ up to some maximal order $p(\underline i) \ge 1$, so that $\phi_{i_1\ldots i_{p(\underline i)}}^{-1} B $ contains a ball of radius $C_1 r_0$ and it is contained in a ball of radius $ r_0$, for some fixed constant $C_1$ independent of $B, \underline i$. Then by using successively the $\Phi$-invariance of $\hat\mu$, relation (\ref{inv}) becomes:
\begin{equation}\label{inve}
\hat\mu([\omega_1\ldots \omega_n]\times B) = \mathop{\sum}\limits_{\underline i \in I} \hat\mu\big([i_{p(\underline i)}\ldots i_1 \omega_1\ldots \omega_n]\times \phi_{i_1\ldots i_{p(\underline i)}}^{-1}B\big)
\end{equation}
Without loss of generality one can assume that  $\phi_{i_1\ldots i_{p(\underline i)}}^{-1} B $ is a ball of radius $r_0$. Notice that the set $[i_{p(\underline i)}\ldots i_1\omega_1\ldots \omega_n]\times \phi_{i_1\ldots i_{p(\underline i)}}^{-1}B$ is the Bowen ball $[i_{p(\underline i)}\ldots i_1\omega_1\ldots \omega_n] \times B(x_0, r_0)$ for $\Phi$. Since $\hat\mu$ is the equilibrium state of $\psi \circ \pi_1$, and since $P_\Phi(\psi \circ \pi_1) = P_\sigma(\psi):= P(\psi)$, we have:
\begin{equation}\label{Bx0}
\begin{aligned}
\hat\mu([i_{p(\underline i)}\ldots i_1\omega_1\ldots \omega_n] \times \phi_{i_1\ldots i_{p(\underline i})}^{-1}B) &\approx \exp(S_{n+p(\underline i)}\psi(i_{p(\underline i)}\ldots i_1\omega_1\ldots \omega_n)-(n+p(\underline i))P(\psi))\approx\\
&\approx \mu^+([\omega_1\ldots \omega_n]) \cdot \hat \mu([i_{p(\underline i)}\ldots i_1]\times \phi_{i_1\ldots i_{p(\underline i)}}^{-1}B),
\end{aligned}
\end{equation}
where the comparability constant above does not depend on $B, i_1, \ldots, i_{p(\underline i)}, n$.
If we choose another finite sequence $(\eta_1, \ldots, \eta_n) \in I^n$, then we can take again for any $\underline i \in \Sigma_I^+$, the same indices $i_1, \ldots, i_{p(\underline i)}$ such that $\phi_{i_1\ldots i_{p(\underline i)}}^{-1}B$ is a ball of radius $r_0$, thus,
\begin{equation}\label{Bx1}
\begin{aligned}
\hat\mu([i_{p(\underline i)}\ldots i_1\eta_1\ldots \eta_n]\times \phi_{i_1\ldots i_{p(\underline i)}}^{-1} B) &\approx \exp(S_{n+p(\underline i)}\psi(i_{p(\underline i)}\ldots i_1\eta_1\ldots \eta_n) - (n+p(\underline i)) P(\psi)) \\
&\approx \mu^+([\eta_1\ldots \eta_n]) \cdot \hat\mu([i_{p(\underline i)}\ldots i_1]\times \phi_{i_1\ldots i_{p(\underline i)}}^{-1}B),
\end{aligned}
\end{equation}
where the comparability constant does not depend on $B, i_1, \ldots, i_{p(\underline i)}, n$.
But the cover of $A$ with small balls  of type $B$ and the above process of inflating these balls along prehistories $\underline i$ to balls of radius $r_0$, can be done along any trajectories $\omega, \eta$. 
Thus by  (\ref{inve}) and using the uniform estimates (\ref{Bx0}), (\ref{Bx1}) and (\ref{muomega}), we obtain that there exists a constant $C>0$ such that for $\mu^+$-a.e $\omega, \eta \in \Sigma_I^+$, 
\begin{equation}\label{etao}
\frac 1C\mu_\eta(A) \le \mu_\omega(A) \le C\mu_\eta(A)
\end{equation}
Thus from (\ref{etao}) and the desintegration formula (\ref{desint}) for $\hat\mu$, it follows
 that there exists a constant (denoted also by $C$) such that for all Borel sets $A_1\subset \Sigma_I^+, A_2\subset \Lambda$, 
 $$
\frac 1C \mu^+(A_1)\cdot\mu_\omega(A_2)\le \hat\mu(A_1\times A_2) \le C \mu^+(A_1) \cdot \mu_\omega(A_2)
$$
For the final statement, recall that $\nu_2 = \pi_{2*}\hat\mu$, so $\nu_2(A) = \hat\mu(\Sigma_I^+\times A)$. Then we use the last displayed formula to obtain a constant $C>0$, such that for $\mu^+$-a.e $\omega \in \Sigma_I^+$ and any Borel set $A\subset \Lambda$,  we have $$\frac 1C \mu_\omega(A) \le \nu_2(A) \le C \mu_\omega(A)$$
\end{proof}
Now we prove Theorem \ref{thm1}, i.e. the formula for the pointwise dimension of $\nu_2$.

\

\textit{Proof of Theorem \ref{thm1}.}

First we prove the upper estimate for the pointwise dimension of $\nu_2$. For any $n \ge 1, (\omega, x) \in \Sigma_I^+\times \Lambda$, \ $\Phi^n(\omega, x) = (\sigma^n\omega, \phi_{\omega_n\ldots \omega_1}(x))$. From Birkhoff Ergodic Theorem applied to the $\Phi$-invariant measure $\hat\mu$, it follows that for $\hat\mu$-a.e $(\omega, x) \in \Sigma_I^+\times \Lambda$, 
$$\frac 1n \log|\phi_{\omega_n\ldots \omega_1}'|(x) \mathop{\longrightarrow}\limits_n \mathop{\sum}\limits_{i \in I} \int_{[i]\times \Lambda} \log|\phi_i'(x)| \ d\hat\mu(\omega, x) = \chi_s(\hat\mu)$$
On the other hand, from the Chain Rule for Jacobians, Birkhoff Ergodic Theorem and the formula for folding entropy $F_\Phi(\hat\mu)$, it follows that for $\hat\mu$-a.e $(\omega, x) \in \Sigma_I^+\times \Lambda$, 
$$\frac 1n \log J_{\Phi^n}(\hat\mu)(\omega, x) \mathop{\longrightarrow}\limits_n F_\Phi(\hat\mu)$$
 Thus for a set of $(\omega, x) \in\Sigma_I^+\times \Lambda$ of  full $\hat\mu$-measure,  
$$\frac 1n \log|\phi_{\omega_n\ldots \omega_1}'|(x) \mathop{\longrightarrow}\limits_n \mathop{\sum}\limits_{i \in I} \int_{[i]\times \Lambda} \log|\phi_i'(x)| \ d\hat\mu(\omega, x) = \chi_s(\hat\mu), \ \text{and} \  \ \frac 1n \log J_{\Phi^n}(\hat\mu)(\omega, x) \mathop{\longrightarrow}\limits_n F_\Phi(\hat\mu)$$
We now want to prove that the Jacobian $J_{\Phi^n}(\hat\mu)(\omega, x)$ depends basically  only on $\omega_1, \ldots, \omega_n$, i.e there exists a constant $C>0$ such that for every  $n \ge 1$, and $\hat\mu$-a.e  $(\eta, x)  \in [\omega_1\ldots \omega_n] \times \Lambda$, 
\begin{equation}\label{compjac}
\frac 1C J_{\Phi^n}(\hat\mu)(\eta, x) \le J_{\Phi^n}(\hat\mu)(\omega, x) \le C J_{\Phi^n}(\hat\mu)(\eta, x)
\end{equation}
In order to prove this, notice that if $r>0$, and $p>1$ is such that $diam[\omega_1\ldots \omega_{n+p}] = r$, then 
\begin{equation}\label{jacphin}
J_{\Phi^n}(\hat \mu)(\omega, x) = \mathop{\lim}\limits_{r\to 0, p \to \infty} \frac{\hat\mu(\Phi^n([\omega_1\ldots\omega_{n+p}]\times B(x, r))}{\hat\mu([\omega_1\ldots \omega_{n+p}]\times B(x, r))}
\end{equation}
 In our case, $\Phi^n([\omega_1\ldots \omega_{n+p}]\times B(x, r)) = [\omega_{n+1}\ldots \omega_{n+p}]\times \phi_{\omega_n\ldots\omega_1}B(x, r)$. If $\eta \in [\omega_1\ldots \omega_n]$,  
 $$\Phi^n([\eta_1\ldots \eta_{n+p}] \times B(x, r)) = [\eta_{n+1}\ldots \eta_{n+p}]\times \phi_{\omega_n \ldots \omega_1} B(x, r)$$
But from Lemma \ref{prodstr} there exists a constant $C>0$ such that for $\mu^+$-a.e $\omega \in \Sigma_I^+$, and any $n, p\ge 1$, 
\begin{equation}\label{comphat}
\begin{aligned}
\frac 1C \mu^+([\omega_{n+1}\ldots \omega_{n+p}])&\mu_\omega(\phi_{\omega_n\ldots\omega_1}B(x, r)) \le \hat \mu([\omega_{n+1}\ldots \omega_{n+p}]\times \phi_{\omega_n\ldots \omega_1}B(x, r)) \le \\
&\le C \mu^+([\omega_{n+1}\ldots \omega_{n+p}])\mu_\omega(\phi_{\omega_n\ldots \omega_1} B(x, r)),
\end{aligned}
\end{equation}
and similarly for $\hat\mu([\eta_{n+1}\ldots \eta_{n+p}]\times \phi_{\omega_n\ldots \omega_1} B(x, r))$.
Hence in view of (\ref{jacphin}) and (\ref{comphat}), we have only to compare the following quantities,
$$\frac{\mu^+([\omega_{n+1}\ldots\omega_{n+p}])\cdot \mu_\omega(\phi_{\omega_n\ldots \omega_1} B(x, r))}{\mu^+([\omega_1\ldots \omega_{n+p}])\cdot \mu_\omega(B(x, r))} \ \ \text{and} \ \ \frac{\mu^+([\eta_{n+1}\ldots \eta_{n+p}])\cdot\mu_\omega(\phi_{\omega_n\ldots\omega_1} B(x, r))}{\mu^+([\eta_1\ldots \eta_{n+p}]) \cdot \mu_\omega(B(x, r))}$$
However recall that $\eta \in [\omega_1\ldots \omega_n]$, thus there exists a constant $K>0$ such that 
\begin{equation}\label{holderS}
|S_n\psi(\eta_1\ldots \eta_n\ldots) - S_n\psi(\omega_1\ldots \omega_n\ldots)| \le K,
\end{equation}
since $\psi$ is H\"older continuous and $\sigma$ is expanding on $\Sigma_I^+$.
The same argument also implies that  \ $S_{n+p}\psi(\omega_1\ldots \omega_{n+p}\ldots)$ is determined in fact only by the first $n+p$ coordinates (modulo an additive constant).
Since $\mu^+$ is the equilibrium measure of $\psi$ on $\Sigma_I^+$, then 
\begin{equation}\label{mu+o}
\begin{aligned}
&\frac{\mu^+([\omega_{n+1}\ldots \omega_{n+p}])}{\mu^+([\omega_1\ldots \omega_{n+p}])} \approx \frac{\exp(S_p\psi(\omega_{n+1}\ldots \omega_{n+p}\ldots) - pP(\psi))}{\exp(S_{n+p}\psi(\omega_1\ldots \omega_{n+p}) - (n+p)P(\psi))} \ \text{and},\\
&\frac{\mu^+([\eta_{n+1}\ldots \eta_{n+p}])}{\mu^+([\eta_1\ldots \eta_{n+p}])} \approx \frac{\exp(S_p\psi(\eta_{n+1}\ldots \eta_{n+p}\ldots) - pP(\psi))}{\exp(S_{n+p}\psi(\eta_1\ldots \eta_{n+p}) - (n+p)P(\psi))},
\end{aligned}
\end{equation}
 where the comparability constant does not depend on $n, p, \omega, \eta$. But we have: \ $S_{n+p}\psi(\omega_1\ldots \omega_{n+p}\ldots) = S_n\psi(\omega_1\ldots \omega_{n+p}\ldots) + S_p\psi(\omega_{n+1}\ldots \omega_{n+p}\ldots)$.
 And similary for $S_{n+p}\psi(\eta_1\ldots \eta_{n+p}\ldots)$.
Therefore, using (\ref{jacphin}), (\ref{comphat}), (\ref{holderS}) and (\ref{mu+o}), we obtain the Jacobians inequalites in (\ref{compjac}).

Let us take now, for any $n >1$ and $\vp>0$,  the Borel set in $\Sigma_I^+\times\Lambda$: 
$$A(n, \vp):= \{(\omega, x), \big|\frac{\log|\phi_{\omega_n\ldots \omega_1}'(x)|}{n}-\chi_s(\hat \mu)\big| < \vp, \ |\frac{\log J_{\Phi^n}(\hat\mu)(\omega, x)}{n} - F_\Phi(\hat\mu)|<\vp,   \text{and} \ |\frac{S_n\psi(\omega)}{n}-\int\psi d\mu^+|<\vp\}$$
Then from Birkhoff Ergodic Theorem, for any $\vp>0$, $\hat\mu(A(n, \vp)) \mathop{\longrightarrow}\limits_{n\to \infty} 1$. From (\ref{compjac}), if $(\omega, x) \in A(n, \vp)$ and $\eta \in [\omega_1\ldots \omega_n]$, then $(\eta, x) \in A(n, 2\vp)$, so for any $\delta>0$, 
$$
\mu^+(\{\omega \in \Sigma_I^+, \nu_2(\pi_2(A(n, \vp)\cap [\omega_1\ldots \omega_n]\times \Lambda))>1-\delta\}) \mathop{\longrightarrow}\limits_{n} 1$$
We have from (\ref{compjac}) that $[\omega_1\ldots\omega_n] \times \pi_2 A(n, \vp)\subset A(n, 2\vp)$. Thus from Lemma \ref{prodstr}, for $n > n(\delta)$, 
 \begin{equation}\label{hatmuA}
 \hat\mu(A(n, 2\vp)\cap [\omega_1\ldots \omega_n]\times \Lambda) > C(1-\delta)\cdot\hat\mu([\omega_1\ldots \omega_n]\times \Lambda)
 \end{equation}

Let now $r_n:= 2\vp|\phi_{\omega_n\ldots \omega_1}'(x)|$, \ for $(\omega, x) \in A(n, \vp) \cap [\omega_1\ldots \omega_n]\times \Lambda$. With $y = \phi_{\omega_n\ldots \omega_1}(x)$ we have $\nu_2(B(y, r_n)) \ge \nu_2(\phi_{\omega_n\ldots \omega_1}(\pi_2 (A(n, 2\vp) \cap [\omega_1\ldots \omega_n]\times \Lambda))$, and then since $\nu_2 = \pi_{2*}\hat \mu$, we obtain
$$\nu_2(B(y, r_n)) \ge \hat\mu(\Sigma_I^+\times \pi_2(\Phi^n(A(n, 2\vp)\cap [\omega_1\ldots \omega_n]\times \Lambda))) \ge \hat\mu(\Phi^n(A(n, \vp)\cap [\omega_1\ldots \omega_n]\times \Lambda))$$
But  $\Phi^n$ is injective on the cylinder $[\omega_1\ldots \omega_n]\times \Lambda$ since $\phi_j$ are injective. Thus we can apply the Jacobian formula for the measure of the $\Phi^n$-iterate in the last term of last inequality above,
\begin{equation}\label{injf}
\begin{aligned}
&\nu_2(B(y, r_n)) \ge \hat\mu(\Phi^n(A(n, \vp)\cap [\omega_1\ldots \omega_n]\times \Lambda)) = \int_{A(n, \vp)\cap [\omega_1\ldots \omega_n]\times \Lambda} J_{\Phi^n}(\hat\mu) \ d\hat\mu \\
&\ge \exp\big(n(F_\Phi(\hat\mu)- \vp)\cdot \hat\mu(A(n, \vp) \cap [\omega_1\ldots \omega_n]\times \Lambda\big)
\end{aligned}
\end{equation} 
Recall that $\mu^+([\omega_1\ldots \omega_n]) = \hat \mu([\omega_1\ldots \omega_n]\times \Lambda)$. Then, from (\ref{injf}) and (\ref{hatmuA}) we obtain:
\begin{equation}\label{lastpager}
\nu_2(B(y, r_n))\ge \exp(n(F_\Phi(\hat\mu)-\vp))(1-\delta)C \mu^+([\omega_1\ldots \omega_n]) \ge C(1-\delta) e^{n(F_\Phi(\hat\mu)-\vp)}\exp(S_n\psi(\omega) - nP(\psi)),
\end{equation}
where $C$ is independent of $n, \omega, x, y$, and $P(\psi):=P_\sigma(\psi)$. Since $\mu^+$ is the equilibrium state of $\psi$,  $$P(\psi) = h(\mu^+) + \int \psi \ d\mu^+$$
But from the definition of $A(n, \vp)$, for any $(\omega, x) \in A(n, \vp)$, we have $$e^{n(\chi_s(\hat\mu) +\vp)} \ge  r_n=2\vp|\phi_{\omega_n\ldots\omega_1}(x)| \ge e^{n(\chi_s(\hat\mu) - \vp)}, \ \ \text{hence},$$ 
\begin{equation}\label{nchi}
n (\chi_s(\hat\mu)+\vp) \ge \log r_n \ge n(\chi_s(\hat\mu) - \vp)
\end{equation}
From (\ref{lastpager}), (\ref{nchi}) and the above formula for pressure, we obtain that for $\nu_2$-a.e $y \in \Lambda$,
\begin{equation}\label{upperestd}
\overline{\delta}(\nu_2)(y) = \mathop{\overline{\lim}}\limits_{r\to 0}\frac{\log \nu_2(B(y, r))}{\log r} \le \frac{F_\Phi(\hat\mu)-h(\mu^+)}{\chi_s(\hat\mu)} = \frac{h(\mu^+)-F_\Phi(\hat\mu)}{|\chi_s(\hat\mu)|},
\end{equation}
which proves the upper estimate for the (upper) pointwise dimension of $\nu_2$.

\

Now we prove the more difficult lower estimate for the pointwise dimension of $\nu_2$.
Define for any $m \ge 1$ and $\vp>0$, the following Borel set in $\Sigma_I^+\times \Lambda$, 
$$
\begin{aligned}
\tilde A(m, \vp) := \big\{(\omega, x) \in \Sigma_I^+\times \Lambda, &\ |\frac 1n\log|\phi_{\omega_n\ldots \omega_1}'(x)|-\chi_s(\hat\mu)|<\vp, \ \text{and} \ |\frac 1n \log J_{\Phi^n}(\hat\mu)(\omega, x)- F_{\Phi^n}(\hat\mu)| < \vp, \\
& \text{and} \ |\frac 1n S_n\psi(\omega) - \int \psi d\mu^+| < \vp, \ \forall n \ge m\big\}
\end{aligned}
$$ 
We know from Birkhoff Ergodic Theorem that $\hat\mu(\tilde A(m, \vp)) \mathop{\to}\limits_m 1$, for any $\vp>0$. So we obtain $\nu_2(\pi_2\tilde A(m, \vp)) \mathop{\to}\limits_m 1$. But $\Phi^n([i_1\ldots i_n]\times \Lambda) = \Sigma_I^+\times \phi_{i_n\ldots i_1}\Lambda$, and from the $\Phi$-invariance of $\hat\mu$, we have $\hat\mu(\Phi^n([i_1\ldots i_n]\times \Lambda) \ge \hat\mu([i_1\ldots i_n] \times \Lambda)$. Moreover $\hat\mu([i_1\ldots i_n] \times \Lambda) >0$,  since $\hat\mu$ is the equilibrium measure of $\psi\circ \pi_1$ and $[i_1\ldots i_n]\times \Lambda$ is an open set. In conclusion,  
\begin{equation}\label{pozm}
\nu_2(\phi_{i_n\ldots i_1}\Lambda) = \hat\mu(\Sigma_I^+\times \phi_{i_n\ldots i_1}\Lambda) = \hat\mu(\Phi^n([i_1\ldots i_n]\times \Lambda) \ge \hat\mu([i_1\ldots i_n] \times \Lambda) > 0
\end{equation}

We summarize now the general strategy of the proof, which will be detailed in the sequel. Since $\Phi^m([i_1\ldots i_m]\times \Lambda) = \Sigma_I^+\times \phi_{i_m\dots i_1}\Lambda$, and $\nu_2 = \pi_{2*}\hat\mu$, we have: $$\nu_2(\phi_{i_m\ldots i_1}\Lambda) = \hat\mu(\Sigma_I^+\times \phi_{i_m\ldots i_1}\Lambda) = \hat\mu(\Phi^m([i_1\ldots i_m]\times \Lambda)$$ 
Notice that a small ball $B(x, r)$ can intersect many sets of type $\phi_{i_m\ldots i_1}\Lambda$, for various $m$-tuples $(i_1, \ldots, i_m) \in I^m$, and these image sets may also intersect one another. Thus when estimating $\nu_2(B(x, r))$, all of these sets must be considered; it is not enough in principle to consider only one intersection $B(x, r) \cap \phi_{i_m\ldots i_1}\Lambda$.  \ However, we know from (\ref{pozm}) that $\nu_2(\phi_{i_m\ldots i_1}\Lambda) >0$, thus from the Borel Density Theorem (see \cite{Pe}), it follows that for $\nu_2$-a.e $x \in \phi_{i_m\ldots i_1}\Lambda$, and for all $0< r < r(x)$, $$\frac{\nu_2(B(x, r)\cap \phi_{i_m\ldots i_1}\Lambda)}{\nu_2(B(x, r))} > 1/2$$
 Hence, from the point of view of the measure $\nu_2$, the intersection $B(x, r) \cap \phi_{i_m\ldots i_1}\Lambda$ contains at least half of the $\nu_2$-measure of the ball $B(x, r)$. This hints to the fact that it is enough to consider only one ``good'' image set of type $\phi_{i_m\ldots i_1}\Lambda$. Then  since $\nu_2(\pi_2\tilde A(m, \vp)) \mathop{\to}\limits_m 1$, we can consider only $\nu_2(\phi_{i_m\ldots i_1}(\pi_2\tilde A(m, \vp)))$, which can be estimated using the Jacobian $J_{\Phi^m}(\hat\mu)$ and the genericity of points in $\tilde A(m, \vp)$ with respect to the functions $\log J_{\Phi^m}(\hat\mu)$ and $\log|\phi_{i_m\ldots i_1}'|$. Then we  repeat this argument whenever the iterate of a point belongs to the set of generic points $\tilde A(m, \vp)$. However not all iterates of a point belong to this set, but it will be shown by a delicate estimate that ''most'' of  them hit $\tilde A(m, \vp)$. For the iterates which do not belong to $\tilde A(m, \vp)$, we use a different type of estimate. Then we will repeat and combine these two types of estimates, by an interlacing procedure. \

We now proceed with the full detailed proof:
\newline
For any integer $m>1$, consider the Borel set $\tilde A(m, \vp)$ defined above. Then for any $\alpha>0$ arbitrarily small, there exists an integer $m(\alpha)>1$ such that for any $m > m(\alpha)$,  we have:
\begin{equation}\label{Amalpha}
\hat\mu(\tilde A(m, \vp)) > 1-\alpha
\end{equation}
Let us fix such an integer $m > m(\alpha)$. Then from Birkhoff Ergodic Theorem applied to $\Phi^m$ and $\chi_{\tilde A(m, \vp)}$, we have that for $\hat\mu$-a.e $(\omega', x')\in \Sigma_I^+\times \Lambda$, 
$$\frac 1n Card\{0 \le k \le n, \ \Phi^{km}(\omega', x') \in \tilde A(m, \vp)\} \mathop{\longrightarrow}\limits_{n \to \infty} \hat\mu(\tilde A(m, \vp))$$
Hence, there exists an integer $n(\alpha)$ and a Borel set $D(\alpha)\subset \Sigma_I^+\times \Lambda$, with $\hat\mu(D(\alpha)) > 1-\alpha$, such that for $(\omega', x') \in D(\alpha)$ and $n \ge n(\alpha)$, we have:
\begin{equation}\label{Dalpha}
\frac 1n Card\{0 \le k \le n, \ \Phi^{mk}(\omega', x') \in \tilde A(m, \vp)\} > 1-2\alpha
\end{equation}
In other words a large proportion of the iterates of points $(\omega', x')$ in $D(\alpha)$, belong to the set of generic points $\tilde A(m, \vp)$.
So in the $\Phi^m$-trajectory $(\omega', x'), \Phi^m(\omega', x'), \ldots, \Phi^{nm}(\omega', x')$, there are at least $(1-2\alpha)n$ iterates in $\tilde A(m, \vp)$.  \

 For arbitrary indices $i_1, \ldots  i_m \in I$, let us define now the Borel set in $\Lambda$, $$Y(i_1, \ldots, i_m):= \phi_{i_1 \ldots i_m}\pi_2(\tilde A(m, \vp))$$
Consider first the intersection of all these sets, namely $\mathop{\bigcap}\limits_{i_1, \ldots, i_m \in I} Y(i_1, \ldots, i_m)$. Then take the intersections of these sets except only one of them, so consider sets of type $\mathop{\bigcap}\limits_{(j_1, \ldots, j_m) \ne (i_1, \ldots, i_m)} Y(j_1, \ldots, j_m) \setminus Y(i_1, \ldots, i_m)$, for all $(i_1, \ldots, i_m) \in I^m$. Then we consider the intersections of all the sets $Y(j_1, \ldots, j_m)$ excepting two of them, namely the intersections of type $\mathop{\bigcap}\limits_{(j_1, \ldots, j_m) \notin \{(i_1, \ldots, i_m), \ (i_1', \ldots, i_m')\}} Y(j_1, \ldots, j_m) \setminus \big(Y(i_1, \ldots, i_m) \cup Y(i_1', \ldots, i_m')\big)$, for all the $m$-tuples $(i_1, \ldots, i_m),$ $(i_1', \ldots, i_m')\in I^m$. We continue this procedure until we exhaust all the possible intersections of type 
$$\mathop{\bigcap}\limits_{(j_1, \ldots, j_m) \notin \mathcal J} Y(j_1, \ldots, j_m) \setminus \mathop{\bigcup}\limits_{(i_1, \ldots, i_m) \in \mathcal J} Y(i_1, \ldots, i_m),$$ for some arbitrary given set $\mathcal J$ of $m$-tuples  from $I^m$.  Notice that in this way, by taking all the subsets $\mathcal J \subset I^m$, we obtain by the above procedure mutually disjoint Borel sets (some may be empty).  Denote these  mutually disjoint nonempty sets obtained above by $Z_1(\alpha; m, \vp), \ldots, Z_{M(m)}(\alpha; m, \vp)$. 

Now if for some $1\le i \le M(m)$, we know that $\nu_2(Z_i(\alpha; m, \vp)) >0$, then from the Borel Density Theorem (see \cite{Pe}),  there exists a Borel subset $G_i(\alpha; m, \vp) \subset Z_i(\alpha; m, \vp)$, with $\nu_2(G_i(\alpha; m, \vp)) \ge \nu_2(Z_i(\alpha; m, \vp))(1 - \alpha)$, and  there exists $r_i(\alpha; m, \vp)>0$, such that for any $x \in G_i(\alpha; m, \vp)$,
\begin{equation}\label{Gi}
\frac{\nu_2(B(x, r) \cap Z_i(\alpha; m, \vp))}{\nu_2(B(x, r))} > 1-\alpha,
\end{equation}
for any $0 <r <r_i(\alpha; m, \vp)$. \ 
Now define the Borel subset of $\Lambda$, 
$$G(\alpha; m, \vp):= \mathop{\bigcup}\limits_{i=1}^{M(m)} G_i(\alpha; m, \vp)$$
Now from the construction of the mutually disjoint sets $Z_i(\alpha; m, \vp)$, it follows that,
\begin{equation}\label{cupZ}
\mathop{\sum}\limits_{1 \le i \le M(m)} \nu_2(Z_i(\alpha; m, \vp)) = \nu_2(\mathop{\cup}\limits_{i_1, \ldots, i_m \in I} Y(i_1, \ldots, i_m))
\end{equation}
But from definition of $Y(i_1, \ldots, i_m)$ and the disjointness of different $m$-cylinders,  we have that,
$$
\begin{aligned}
\mathop{\cup}\limits_{i_1, \ldots, i_m \in I} Y(i_1, \ldots, i_m) & = \mathop{\cup}\limits_{i_1, \ldots, i_m \in I} \pi_2(\Phi^m([i_m\ldots i_1] \times \pi_2(\tilde A(m, \vp)))) =\\
&=  \pi_2(\mathop{\cup}\limits_{i_1, \ldots, i_m \in I} \Phi^m([i_m\ldots i_1]\times \pi_2\tilde A(m, \vp))) = \pi_2(\Phi^m(\Sigma_I^+ \times \pi_2\tilde A(m, \vp)))
\end{aligned}
$$
However since $\nu_2 = \pi_{2*}\hat\mu$, and using the $\Phi$-invariance of $\hat \mu$ and (\ref{Amalpha}), it follows that: $$
\begin{aligned}
\nu_2\big( \pi_2(\Phi^m(\Sigma_I^+ \times \pi_2\tilde A(m, \vp)))\big) &= \hat \mu(\Sigma_I^+ \times  \pi_2(\Phi^m(\Sigma_I^+ \times \pi_2\tilde A(m, \vp)))) \ge \hat\mu(\Phi^m(\Sigma_I^+ \times \pi_2\tilde A(m, \vp))) \ge \\ &\ge \hat\mu(\Sigma_I^+ \times \pi_2\tilde A(m, \vp))  \ge \hat \mu(\tilde A(m, \vp)) > 1-\alpha
\end{aligned}
$$
Thus from the last two displayed formulas, it follows that:
\begin{equation}\label{tildemare}
\nu_2(\mathop{\bigcup}\limits_{i_1, \ldots, i_m \in I} Y(i_1, \ldots, i_m)) > 1-\alpha
\end{equation}
Hence from (\ref{cupZ}) and (\ref{tildemare}), we obtain:
\begin{equation}\label{MmZ}
\nu_2\big(\mathop{\bigcup}\limits_{1 \le i \le M(m)} Z_i(\alpha; m, \vp) \big) > 1-\alpha
\end{equation}
But the Borel sets $Z_i(\alpha; m, \vp), 1 \le i \le M(m)$ are mutually disjoint, and from (\ref{Gi}) we know that $G_i(\alpha; m, \vp) \subset Z_i(\alpha; m, \vp)$ and for $1 \le i \le M(m)$, $$\nu_2(G_i(\alpha; m, \vp)) \ge  \nu_2(Z_i(\alpha; m, \vp)) (1-\alpha)$$  Hence from the definition of $G(\alpha; m, \vp) = \mathop{\bigcup}\limits_{1\le i \le M(m)} G_i(\alpha; m, \vp)$ and from (\ref{MmZ}),
\begin{equation}\label{nu2Ga}
 \nu_2(G(\alpha; m, \vp)) > (1-\alpha)^2> 1-2\alpha
 \end{equation}
 Denote now the following intersection set by,
$$X(\alpha; m, \vp):= G(\alpha; m, \vp) \cap \pi_2\tilde A(m, \vp)$$
Then from the above estimate for $\nu_2(G(\alpha; m, \vp))$ and using (\ref{Amalpha}), we obtain:
\begin{equation}\label{nu2X}
\nu_2(X(\alpha; m, \vp)) > 1-3\alpha
\end{equation}

Now by applying the same argument as in (\ref{Dalpha}) to the set $\tilde A(m, \vp) \cap \big(\Sigma_I^+\times X(\alpha; m, \vp)\big)$, we obtain that there exists a Borel set $\tilde D(\alpha; m, \vp) \subset \tilde A(m, \vp) \subset \Sigma_I^+\times \Lambda$, with 
\begin{equation}\label{Dam}
\hat\mu(\tilde D(\alpha; m, \vp)) > 1- 4\alpha,
\end{equation}
 and such that for any pair $(\ul i, x') \in \tilde D(\alpha; m, \vp)$, at least a number of  $(1-3\alpha)n$ of the points $\pi_2(\ul i, x')$, $\pi_2\Phi^m(\ul i, x'), \ldots, \pi_2\Phi^{nm}(\ul i, x')$ belong to  $X(\alpha; m, \vp)$. Moreover any point $\zeta \in X(\alpha; m, \vp)$ satisfies condition (\ref{Gi}) for all  $0 < r < r_i(\alpha; m, \vp)$ if $\zeta \in Z_i(\alpha; m, \vp)$, for some $1 \le i \le M(m)$. So denote $$r_m(\alpha, \vp) := \mathop{\min}\limits_{1\le i \le M(m)} r_i(\alpha; m, \vp)$$

Consider now $(\ul i, x') \in \tilde D_m(\alpha, \vp)$, and denote by $x = \phi_{i_{nm}\ldots i_1}(x') = \pi_2\Phi^{nm}(\ul i, x')$. So we have the following backward trajectory of $x$ with respect to $\Phi^m$ determined by the sequence $\ul i$ from above,  
\begin{equation}\label{seqx}
x, \phi_{i_{(n-1)m}\ldots i_1}(x'), \ldots, \phi_{i_m\ldots i_1}(x'), x',
\end{equation}
 and denote these points respectively by $x, x_{-m}, \ldots, x_{-(n-1)m}, x_{-nm} = x'$.  
To see the next argument,  assume for simplicity that the first preimage of $x$ in this trajectory, namely $x_{-m} = \phi_{i_{(n-1)m}\ldots i_1}(x')$ belongs to $X(\alpha; m, \vp)$. Then from (\ref{Gi}), Lemma \ref{prodstr}, and the genericity of $J_{\Phi^m}$ on $\tilde A(m, \vp)$,
$$
\begin{aligned}
\nu_2(B(x, r)) &\le \frac{1}{1-\alpha} \nu_2(B(x, r) \cap \phi_{i_{nm}\ldots i_{(n-1)m}}\pi_2\tilde A(m, \vp))  = \\
& = \frac{1}{1-\alpha} \hat\mu(\Sigma_I^+\times (B(x, r) \cap \phi_{i_{nm}\ldots i_{(n-1)m}}\pi_2\tilde A(m, \vp))) \le  \\
& \le \frac{1}{1-\alpha} \hat\mu(\Phi^m([i_{(n-1)m}\ldots i_{nm}]\times (\pi_2\tilde A(m, \vp)\cap  (\phi_{i_{nm}\ldots i_{(n-1)m}})^{-1}B(x, r) ))) = \\ 
&= \frac{1}{1-\alpha}\int_{[i_{(n-1)m}\ldots i_{nm}]\times (\pi_2\tilde A(m, \vp)\cap   (\phi_{i_{nm}\ldots i_{(n-1)m}})^{-1}B(x, r)  )} J_{\Phi^m}(\hat\mu) \ d\hat\mu \le \\
&\le C\frac{1}{1-\alpha} e^{m(F_\Phi(\hat\mu)+\vp)}\mu^+([i_{(n-1)m \ldots i_{nm}}])\cdot \nu_2( (\phi_{i_{nm}\ldots i_{(n-1)m}})^{-1}B(x, r) ) \le \\
&\le \frac{C}{1-\alpha} e^{m(F_\Phi(\hat\mu)+2\vp-h(\mu^+))}\nu_2( (\phi_{i_{nm}\ldots i_{(n-1)m}})^{-1}B(x, r)),
\end{aligned}
$$
where the last inequality follows since $\mu^+$ is the equilibrium state of $\psi$ and $P(\psi) = h_{\mu^+} + \int \psi d\mu^+$.
Thus we obtain from above the following estimate on the measure of $B(x, r)$,
\begin{equation}\label{nu2B}
\nu_2(B(x, r)) \le C \frac{1}{1-\alpha} \cdot e^{m(F_\Phi(\hat\mu)-h(\mu^+)+2\vp)}\cdot \nu_2( (\phi_{i_{nm}\ldots i_{(n-1)m}})^{-1}B(x, r) )
\end{equation}
This argument can be repeated until we reach in the above backward trajectory of $x$ (\ref{seqx}), a preimage  which is \textit{not} in $X(\alpha; m, \vp)$.  Denote then by $k_1\ge 1$ the first integer $k$ for which $x_{-mk} \notin X(\alpha; m, \vp)$, and assume that the above process is interrupted for $k_1'$ indices, namely $\{x_{-mk_1}, x_{-m(k_1+1)}, \ldots, x_{-m(k_1+k_1'-1)}\} \cap X(\alpha; m, \vp) = \emptyset$.  Then denote $y:= x_{-mk_1}$, and let us estimate $\nu_2( (\phi_{i_{nm}\ldots i_{(n-k_1)m}})^{-1}B(x, r) )$.
By definition of the projection measure $\nu_2$, 
$$\nu_2( (\phi_{i_{nm}\ldots i_{(n-k_1)m}})^{-1}B(x, r) ) = \hat\mu(\Sigma_I^+\times  (\phi_{i_{nm}\ldots i_{(n-k_1)m}})^{-1}B(x, r) )$$
 By definition of $k_1, k_1'$, we have that $x_{-m(k_1+k_1')} \in X(\alpha; m, \vp)$.
By repeating the estimate in (\ref{nu2B}), we obtain an upper estimate for $\nu_2(B(x, r))$, 
\begin{equation}\label{repeatnu2}
 \nu_2(B(x, r)) \le ( \frac{C}{1-\alpha})^{k_1} \cdot e^{mk_1(F_\Phi(\hat\mu)-h(\mu^+)+2\vp)}\cdot \nu_2( (\phi_{i_{nm}\ldots i_{(n-k_1)m}})^{-1}B(x, r) )
 \end{equation}
Now on the other hand from the $\Phi$-invariance of $\hat\mu$ and  the definition of $\nu_2$,
\begin{equation}\label{Byr1}
\begin{aligned}
\nu_2( (\phi_{i_{nm}\ldots i_{(n-k_1)m}})^{-1} B(x, r) ) &= \hat\mu(\Sigma_I^+\times  (\phi_{i_{nm}\ldots i_{(n-k_1)m}})^{-1}B(x, r) ) = \\
&= \hat\mu(\phi^{-mk_1'}(\Sigma_I^+\times  (\phi_{i_{nm}\ldots i_{(n-k_1)m}})^{-1}B(x, r) ))\\
&= \mathop{\sum}\limits_{j_p\in I, 1\le p \le mk_1'} \hat\mu([j_1\ldots j_{mk_1'}]\times \phi_{j_{mk_1'}\ldots j_1}^{-1}  (\phi_{i_{nm}\ldots i_{(n-k_1)m}})^{-1}B(x, r)  )
\end{aligned}
\end{equation}
Recall now that the set of non-generic points satisfies, $$\hat\mu((\Sigma_I^+\times \Lambda ) \setminus \tilde A(m, \vp)) < \alpha$$
We now compare the $\hat\mu$-measure of the set of generic points with respect to the $\hat\mu$-measure, with the $\hat\mu$-measure of the set of non-generic points. 
There are 2 cases: 

\ a)  \ If, $$\hat\mu\big(\tilde A(m, \vp) \bigcap \Phi^{-mk_1'}(\Sigma_I^+\times  (\phi_{i_{nm}\ldots i_{(n-k_1)m}})^{-1}B(x, r))\big) < \frac 12 \hat\mu(\Phi^{-mk_1'}(\Sigma_I^+\times  (\phi_{i_{nm}\ldots i_{(n-k_1)m}})^{-1}B(x, r) )),$$ then non-generic points have more mass than the generic points, hence,
$$
\nu_2( (\phi_{i_{nm}\ldots i_{(n-k_1)m}})^{-1}B(x, r) ) = \hat\mu(\Sigma_I^+ \times  (\phi_{i_{nm}\ldots i_{(n-k_1)m}})^{-1}B(x, r) ) < 2\alpha <<1
$$
By collecting all sets with the above property and taking $\alpha\to 0$, this case is then straightforward.

\ \  b) \  If, \ $$\hat\mu\big(\tilde A(m, \vp) \cap \Phi^{-mk_1'}(\Sigma_I^+\times  (\phi_{i_{nm}\ldots i_{(n-k_1)m}})^{-1}B(x, r) )\big) \ge \frac 12 \hat \mu(\Phi^{-mk_1'}(\Sigma_I^+\times  (\phi_{i_{nm}\ldots i_{(n-k_1)m}})^{-1}B(x, r) )),$$ then using also (\ref{Byr1}) we obtain:
\begin{equation}\label{genericmu2}
\begin{aligned}
\nu_2( (\phi_{i_{nm}\ldots i_{(n-k_1)m}})^{-1}B(x, r) ) &= \hat\mu(\Sigma_I^+\times  (\phi_{i_{nm}\ldots i_{(n-k_1)m}})^{-1}B(x, r) ) = \\ &= \hat\mu(\Phi^{-mk_1'}(\Sigma_I^+\times  (\phi_{i_{nm}\ldots i_{(n-k_1)m}})^{-1}B(x, r) )) \\
& \le 2\mathop{\sum}\limits_{\ul j \ \text{generic}}\hat\mu\big(\tilde A(m, \vp)\cap \big([j_1\ldots j_{mk_1'}] \times \phi_{j_{mk_1'}\ldots j_1}^{-1}  (\phi_{i_{nm}\ldots i_{(n-k_1)m}})^{-1}B(x, r)\big) \big)
\end{aligned}
\end{equation}
But for two generic histories $\ul j, \ul j'$, i.e. for $\ul j, \ul j' \in \pi_1(\tilde A(m, \vp))$, we know that for any 
 \ $(\omega, z) \in \tilde A(m, \vp) \bigcap \big( [j_1\ldots j_{mk_1'}]\times \phi_{j_{mk_1'}\ldots j_1}^{-1} (\phi_{i_{nm}\ldots i_{(n-k_1)m}})^{-1}B(x, r) $ $\bigcup \ [j_1'\ldots j_{mk_1'}']\times \phi_{j_{mk_1'}'\ldots j_1'}^{-1} (\phi_{i_{nm}\ldots i_{(n-k_1)m}})^{-1}B(x, r) \big)$,
$$J_{\Phi^{mk_1'}}(\hat\mu)(\omega, z) \  \in  \ \big(e^{mk_1'(F_\Phi(\hat\mu) -\vp)}, \ e^{mk_1'(F_\Phi(\hat\mu) +\vp)}\big).$$  Hence since $\Phi^{mk_1'}(\tilde A(m, \vp) \ \bigcap \ [j_1\ldots j_{mk_1'}]\times \phi_{j_{mk_1'}\ldots j_1}^{-1} (\phi_{i_{nm}\ldots i_{(n-k_1)m}})^{-1}B(x, r) \subset \phi_{i_{nm}\ldots i_{(n-k_1)m}})^{-1}B(x, r)$ and using the above estimate on the Jacobian of $\hat\mu$ with respect to $\Phi^{mk_1'}$, it follows that    there exists a constant factor $C>1$ such that the ratio of the $\hat\mu$-measures of the two preimage type sets   $$ \tilde A(m, \vp) \ \bigcap \ [j_1\ldots j_{mk_1'}]\times \phi_{j_{mk_1'}\ldots j_1}^{-1} (\phi_{i_{nm}\ldots i_{(n-k_1)m}})^{-1}B(x, r),  $$ and  $$ \tilde A(m, \vp) \ \bigcap \ [j_1'\ldots j'_{mk_1'}]\times \phi_{j'_{mk_1'}\ldots j'_1}^{-1} (\phi_{i_{nm}\ldots i_{(n-k_1)m}})^{-1}B(x, r),$$ corresponding to generic $\ul j, \ul j'$, \ belongs to the interval $\big(C^{-1} e^{-mk_1' \vp}, C e^{mk_1' \vp}\big)$. 
\newline
Notice also that there are at most $d^{mk_1'}$  sets of type $ [j_1\ldots j_{mk_1'}]\times \phi_{j_{mk_1'}\ldots j_1}^{-1} (\phi_{i_{nm}\ldots i_{(n-k_1)m}})^{-1}B(x, r)$, where $d:= |I|$. The maximality assumption for $k_1'$ implies that $\phi_{i_{m(n-k_1)}\ldots i_{m(n-k_1-k_1')}}^{-1}(y) \in X(\alpha; m, \vp)$, where $y:= x_{-mk_1}$. 
Thus using the above discussion and (\ref{genericmu2}), we obtain:
\begin{equation}\label{modphinu}
\nu_2((\phi_{i_{nm}\ldots i_{(n-k_1)m}})^{-1}B(x, r)) \le Cd^{mk_1'(1+\frac{2\vp}{\log d})}\cdot \nu_2(\phi_{i_{m(n-k_1-1)}\ldots i_{m(n-k_1-k_1')}}^{-1} (\phi_{i_{nm}\ldots i_{(n-k_1)m}})^{-1}B(x, r))
\end{equation}

Now, recall we concluded above that $\phi_{i_{m(n-k_1)}\ldots i_{m(n-k_1-k_1')}}^{-1}(y) \in X(\alpha; m, \vp)$, and then we can apply again the argument from (\ref{repeatnu2}) along the sequence $\ul i$ until we reach another preimage of $x$ which does not belong to $X(\alpha; m, \vp)$; after that we apply again the argument from (\ref{modphinu}), and so on, until reaching the $nm$-preimage of $x$, namely $x_{-nm}= \phi_{i_{nm}\ldots i_1}^{-1}(x)$. Recall from (\ref{seqx}) that  $x' := x_{-nm}$. Consider now $r_0>0$ to be a fixed radius, and $n= n(r)$ be chosen such that due to the conformality of $\phi_i, i \in I$,  one can assume that:
\begin{equation}\label{zeta0}
\phi^{-1}_{i_{nm}\ldots i_1}B(x, r) = B(x_{-nm}, r_0) = B(x', r_0)
\end{equation} 
On the backward $m$-trajectory (\ref{seqx}) of $x$ determined by $\ul i$ above, recall that we denoted by $k_1'$ the length of the first maximum ``gap'' consisting of preimages of $x$ which are not in $X(\alpha; m, \vp)$. Now let us denote in general the lengths of such maximal ``gaps'' in this trajectory (\ref{seqx}), consisting of consecutive preimages which do not belong to $X(\alpha; m, \vp)$, by  $k_1', k_2', \ldots$.  
\ \ More precisely, we have $x_{-jm} \in X(\alpha; m, \vp), 0\le j \le k_1-1$, followed by  $x_{-mk_1}, \ldots, x_{-m(k_1+k_1'-1)} \notin X_m$; then $x_{-m(k_1+k_1')}, \ldots, x_{-m(k_1+k_1'+k_2-1)} \in X_{m}(\alpha, \vp)$, followed by  $x_{-m(k_1+k_1'+k_2)}, \ldots, x_{-m(k_1+k_1'+k_2+k_2'-1)} \notin X(\alpha; m, \vp)$; then,  $x_{-m(k_1+k_1'+k_2+k_2')}, \ldots, x_{-m(k_1+k_1'+k_2+k_2'+k_3-1)}$ $\in X(\alpha; m, \vp)$, and so on.  
\newline
Denote thus by $k_1, \ldots, k_{p(n)}$ and $k_1', \ldots, k_{p'(n)}'$ all the integers obtained by the above procedure, corresponding to  the sequence (\ref{seqx}). Clearly we have $$k_1+k_1' + \ldots + k_{p(n)} + k_{p(n)}' = n$$ Also $p'(n)$ is equal either to $p(n)$ or to $p(n) -1$.
Then from the properties of $\tilde D_m(\alpha, \vp)$ in (\ref{Dam}), 
\begin{equation}\label{gapsbound}
k_1 + \ldots + k_{p(n)} \ge n(1-3\alpha), \ \text{and} \ \  k_1'+k_2'+\ldots + k_{p'(n)}'\le 3\alpha n
\end{equation}
We will apply (\ref{repeatnu2}) for the generic preimages $x, \ldots, x_{-m(k_1-1)}$ from the trajectory (\ref{seqx}), then we apply the estimate in (\ref{modphinu}) for the nongeneric preimages $x_{-mk_1}, \ldots, x_{-m(k_1+k_1'-1)}$, then again apply (\ref{repeatnu2}) for $x_{-m(k_1+k_1')}, \ldots, x_{-m(k_1+k_1'+k_2-1)}$, followed by (\ref{modphinu}) for $x_{-m(k_1+k_1'+k_2)}, \ldots, x_{-m(k_1+k_1'+k_2+k_2'-1)}$, and so on. 
Hence applying succesively the estimates  (\ref{repeatnu2}) and (\ref{modphinu}), and recalling (\ref{zeta0}) and the bound on the combined length of gaps $k_1'+k_2'+\ldots + k_{p'(n)}'\le 3 \alpha n$ from (\ref{gapsbound}), we obtain:
\begin{equation}\label{combest}
\nu_2(B(x, r)) \le (\frac{C}{1-\alpha})^{n}\cdot d^{3\alpha mn(1+\frac{\vp}{\log d})}\cdot e^{(F_\Phi(\hat \mu) - h(\mu^+) +2\vp) \cdot mn(1-3\alpha)} \nu_2(B(x', r_0))
\end{equation}
But  $\chi_s(\hat\mu)$ is the stable Lyapunov exponent of $\hat\mu$, i.e $\chi_s(\hat\mu) = \int_{\Sigma_I^+\times \Lambda} \log |\phi_{\omega_1}'(x)| \ d\hat\mu(\omega, x)$. From (\ref{zeta0}) it follows  that $r = r_n= r_0|\phi_{i_{nm}\ldots i_1}'(x')|$, and recall that $(\ul i, x') \in \tilde D_m(\alpha, \vp)$. Therefore, 
\begin{equation}\label{rlyap}
e^{nm(\chi_s(\hat\mu)-\vp)} \le r_n \le e^{nm(\chi_s(\hat\mu) +\vp)}
\end{equation}
Denote by $\tilde X(\alpha; m, \vp):= \pi_2 \tilde D_m(\alpha, \vp)$. Then since $\nu_2:= \pi_{2*}\hat\mu$ and $\hat\mu(\tilde D_m(\alpha, \vp)) > 1-4\alpha$, we obtain,
$$\nu_2(\tilde X(\alpha; m, \vp)) = \hat\mu(\Sigma_I^+\times \tilde X(\alpha; m, \vp)) \ge \hat \mu(\tilde D_m(\alpha, \vp)) \ge 1-4\alpha$$
From (\ref{combest}) and (\ref{rlyap}) it follows that by taking possibly $C' = 2C>1$, then for every $x \in \tilde X(\alpha; m, \vp)$ and for a sequence $r_n\to 0$, the following estimate holds: 
$$
\nu_2(B(x, r_n)) \le (\frac{C'}{1-\alpha})^{n}\cdot d^{3\alpha mn (1+\frac{2\vp}{\log d})} \cdot r_n^{(1-3\alpha)\cdot \frac{F_\Phi(\hat\mu)-h(\mu^+)+2\vp}{\chi_s(\hat\mu)}}
$$
Thus,
\begin{equation}\label{Thus14}
\log \nu_2(B(x, r_n)) \le (1-3\alpha) \log r_n\cdot \frac{F_\Phi(\hat\mu)-h(\mu^+)+2\vp}{\chi_s(\hat\mu)} + n \log \frac{C'}{1-\alpha} + 3\alpha mn \cdot \log d (1+\frac{2\vp}{\log d})
\end{equation}
But by using (\ref{rlyap}) and dividing in (\ref{Thus14}) by $\log r_n$, one obtains:
\begin{equation}\label{nu2r}
\frac{\log \nu_2(B(x, r_n))}{\log r_n} \ge (1-3\alpha) \frac{F_\Phi(\hat\mu)-h(\mu^+)+2\vp}{\chi_s(\hat\mu)} + \frac{\log\frac{C'}{1-\alpha}}{m(\chi_s(\hat\mu)+\vp)} + (1+\frac{2\vp}{\log d}) \frac{3 \alpha \log d}{\chi_s(\hat\mu) + \vp}
\end{equation}
Now let us take some arbitrary radius $\rho>0$, and assume that for some integer $n$, we have $r_{n+1}\le \rho \le r_n$, where $r_n$ is defined at (\ref{rlyap}). Then $\nu_2(B(x, \rho)) \le \nu_2(B(x, r_n))$, therefore $$\frac{\log\nu_2(B(x, \rho))}{\log \rho} \ge \frac{\log \nu_2(B(x, r_n))}{\log \rho}$$ But $\log \rho \ge \log r_{n+1}$, hence $\frac{1}{\log \rho} \le \frac{1}{\log r_{n+1}} < 0$, hence from above,
\begin{equation}\label{rhon}
\frac{\log\nu_2(B(x, \rho))}{\log \rho} \ge \frac{\log\nu_2(B(x, r_n))}{\log r_{n+1}} \ge \frac{\log \nu_2(B(x, r_n))}{c+\log r_n},
\end{equation}
since $r_{n+1} > cr_n$, for some constant $c$ independent of $n$.
Thus by letting $\rho \to 0$, and using (\ref{nu2r}) and (\ref{rhon}), we obtain the following lower estimate for the lower pointwise dimension of $\nu_2$:
$$\underline{\delta}(\nu_2)(x) \ge (1-3\alpha) \frac{F_\Phi(\hat\mu)-h(\mu^+)+2\vp}{\chi_s(\hat\mu)} + \frac{\log\frac{C'}{1-\alpha}}{m(\chi_s(\hat\mu)+\vp)} + 3\alpha(1+\frac{2\vp}{\log d}) \frac{\log d}{\chi_s(\hat\mu) + \vp}
$$
But  $\alpha \to 0$ when $m \to \infty$, and then $\nu_2(\tilde X(\alpha; m, \vp)) \to 1$. 
Thus from the last displayed estimate, it follows that for $\nu_2$-a.e $x \in \Lambda$, 
$$\underline{\delta}(\nu_2)(x) \ge  \frac{F_\Phi(\hat\mu)-h(\mu^+)+2\vp}{\chi_s(\hat\mu)} $$
Since $\vp$ is arbitrarily small, it follows from the above lower estimate and  (\ref{upperestd}),  that for $\nu_2$-a.e $x \in \Lambda$, 
$$\delta(\nu_2)(x) =  \frac{F_\Phi(\hat\mu)-h(\mu^+)}{\chi_s(\hat\mu)}$$
$\hfill\square$

\textit{Proof of Theorem \ref{thm2}.}

We proved in \cite{MU-JSP2016} that if $\mu^+$ is a Bernoulli measure $\nu_{\textbf p}$ on $\Sigma_I^+$, then the corresponding projections $\nu_{1, \textbf p}$ and $\nu_{2, \textbf p}$ are equal.
Hence they have the same pointwise dimensions. Also recall from \cite{FH} that $\nu_{1, \textbf p}$ is exact dimensional, thus also $\nu_{2, \textbf p}$ is exact dimensional. 
Thus, by using the formula (\ref{FHformula}) and Theorem \ref{thm1}, we obtain the expression for $h_\pi(\sigma, \nu_{\textbf p})$ in terms of the overlap number of $\hat\mu_{\textbf p}$, i.e, $$h_\pi(\sigma, \nu_{\textbf p}) = h_\sigma(\nu_{\textbf p}) - \exp(o(\mathcal S, \hat\mu_{\textbf p}))$$
In case $\nu_{\textbf p}$ is the measure of maximal entropy $\mu_0$ on $\Sigma_I^+$, we have $h(\mu_0) = \log |I|$, and we obtain $h_\pi(\sigma, \mu_0) = \log |I| - \exp(o(\mathcal S))$.

$\hfill\square$

In particular, we obtain a geometric formula for the pointwise dimension of the \textit{Bernoulli convolution} $\nu_\lambda$ in terms of its overlap number, for all $\lambda \in (\frac 12, 1)$. 

The Bernoulli convolution measure $\nu_\lambda$ is obtained as the projection of the measure of maximal entropy $\hat\mu_0$ onto the limit set $\Lambda_\lambda$ of the system $\mathcal S_\lambda$ given by the two contractions $\phi_1(x) = \lambda x - 1, \phi_2(x) = \lambda x +1$ on $\mathbb R$ (see \cite{PSS}). When $\lambda \in (\frac 12, 1)$, the system $\mathcal S_\lambda$ has overlaps, and its limit set $\Lambda_\lambda$ is the interval $I_\lambda = [-\frac{1}{1-\lambda}, \frac{1}{1-\lambda}]$. The measure $\nu_\lambda$ is the unique self-conformal measure satisfying $\nu_\lambda = \frac 12 \nu_\lambda \circ \phi_1^{-1} + \frac 12 \nu_\lambda \circ \phi_2^{-1}$. It is clear that in this case, if $\nu_{(\frac 12 \frac 12)}$ is the Bernoulli measure on $\Sigma_2^+$ given by the probability vector $(\frac 12, \frac 12)$, and if $\hat\mu_0$ is the measure of maximal entropy on $\Sigma_2^+\times \Lambda$, then  $$\nu_\lambda = \pi_*\nu_{(\frac 12, \frac 12)} = \pi_{2*}\hat\mu_0$$
Thus $\nu_\lambda$ is exact dimensional, and by applying Theorem \ref{thm2} we obtain:

\begin{cor}\label{bern}
For any $\lambda \in (\frac 12, 1)$,  the Hausdorff dimension of the Bernoulli convolution $\nu_\lambda$ satisfies:
$$\delta(\nu_\lambda) = \delta(\nu_\lambda)(x) = \frac{\log\frac{2}{o(\mathcal S_\lambda)}}{|\log \lambda|}, \ \text{for} \ \nu_\lambda-a.e \ x $$
\end{cor}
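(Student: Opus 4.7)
The plan is to specialize Theorem \ref{thm2} (equivalently, Theorem \ref{thm1}) to the system $\mathcal S_\lambda = \{\phi_1,\phi_2\}$ with the uniform probability vector $\textbf p = (\tfrac12,\tfrac12)$. As recalled in the paragraph preceding the corollary, for this choice $\nu_{\textbf p}$ is the measure of maximal entropy $\mu_0$ on $\Sigma_2^+$, its lift $\hat\mu_{\textbf p}$ is the measure of maximal entropy $\hat\mu_0$ on $\Sigma_2^+\times \Lambda_\lambda$, and the projection $\nu_{2,\textbf p} = \pi_{2*}\hat\mu_0$ coincides with the Bernoulli convolution $\nu_\lambda$. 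Exact dimensionality of $\nu_\lambda$ is then immediate from Theorem \ref{thm1}, so the Hausdorff dimension of $\nu_\lambda$ equals the common $\nu_\lambda$-almost sure value of its pointwise dimension, and it suffices to identify that value.

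Next I would evaluate each of the three ingredients entering the formula $\delta(\nu_2)(x) = (F_\Phi(\hat\mu) - h_\sigma(\mu^+))/\chi_s(\hat\mu)$ of Theorem \ref{thm1}. The shift entropy is $h_\sigma(\mu_0) = \log|I| = \log 2$. The stable Lyapunov exponent reduces to $\chi_s(\hat\mu_0) = \log \lambda$, because both branches $\phi_1, \phi_2$ have derivative identically equal to $\lambda$, so the integrand in the definition of $\chi_s$ is the constant $\log \lambda$. Finally, by relation (\ref{oS}), the folding entropy is $F_\Phi(\hat\mu_0) = \log o(\mathcal S_\lambda,\hat\mu_0) = \log o(\mathcal S_\lambda)$, where the last equality is the definition of the topological overlap number for the measure of maximal entropy.

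Substituting these three values into Theorem \ref{thm1} yields, for $\nu_\lambda$-almost every $x \in \Lambda_\lambda$,
\[
\delta(\nu_\lambda)(x) \;=\; \frac{\log o(\mathcal S_\lambda) - \log 2}{\log \lambda} \;=\; \frac{\log 2 - \log o(\mathcal S_\lambda)}{|\log \lambda|} \;=\; \frac{\log\bigl(2/o(\mathcal S_\lambda)\bigr)}{|\log \lambda|},
\]
using $\log \lambda = -|\log \lambda|$ since $\lambda \in (\tfrac12,1)$. Honestly, there is no substantive obstacle: the argument is a direct specialization of an already-established result, and the only point requiring a moment of care is the sign of $\log \lambda$, handled cosmetically as above. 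All the genuine content has been absorbed into Theorem \ref{thm1} and the folding-entropy/overlap-number identity (\ref{oS}); the claim about the Hausdorff dimension $\delta(\nu_\lambda)$ itself follows automatically from exact dimensionality.
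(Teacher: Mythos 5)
Your proposal is correct and follows essentially the same route as the paper: specialize Theorem \ref{thm2} (equivalently Theorem \ref{thm1} together with the identity $F_\Phi(\hat\mu_0)=\log o(\mathcal S_\lambda)$ from (\ref{oS})) to $\textbf p=(\tfrac12,\tfrac12)$, using $h_\sigma(\mu_0)=\log 2$, $\chi_s(\hat\mu_0)=\log\lambda$, and exact dimensionality to pass to the Hausdorff dimension. The sign bookkeeping $\log\lambda=-|\log\lambda|$ is handled correctly, so nothing is missing.
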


\

\textbf{Acknowledgements:}   The author thanks Professor Yakov Pesin for interesting discussions during a visit at Pennsylvania State University. During work on this article Eugen Mihailescu was supported by grant PN III-P4-ID-PCE-2016-0823 from UEFISCDI. He also acknowledges the support of Institut des Hautes \'Etudes Sci\'entifiques, Bures-sur-Yvette, France, for a stay when  part of this paper was done.

\
 
 Eugen Mihailescu
 
Institute of Mathematics of the Romanian Academy, 
Calea Grivitei 21,  
Bucharest, Romania.

Email: \  Eugen.Mihailescu\@@imar.ro

Web: \ www.imar.ro/$\sim$mihailes


\begin{thebibliography}{99}

\bibitem{BPS} L. Barreira, Ya. Pesin, J. Schmeling, Dimension and product structure of hyperbolic measures, Ann. of Math. 149 (1999), 755-783.

\bibitem{Bo}
R. Bowen, Equilibrium states and the ergodic theory of Anosov
diffeomorphisms, Lecture Notes in Math, 470, Springer 1975.

\bibitem{BH}
H. Bruin, J. Hawkins, Rigidity of smooth one-sided Bernoulli endomorphisms, New York J. Math., 15 (2009), 451-483.

%\bibitem{F}
%K.  Falconer, The Hausdorff dimension of some fractals and attractors of overlapping construction, J. Stat. Physics, 47, 1-2, 1987.

\bibitem{FH}
D.J. Feng, H. Hu, Dimension theory of iterated function systems, Commun Pure Applied Math, 62, 1435-1500, 2009.

\bibitem{FM}
J.E Fornaess, E. Mihailescu, Equilibrium measures on saddle sets of holomorphic maps on $\mathbb P^2$, Math Annalen, 356, 1471-1491, 2013.

\bibitem{HH}
K.E Hare, K.G Hare, Local dimensions of overlapping self-similar measures,
arXiv:1807.08676.
 
\bibitem{KH}
A. Katok, B.\ Hasselblatt, Introduction to the Modern Theory of Dynamical Systems, Cambridge Univ. Press,
London-New York, 1995.

\bibitem{M-ETDS11}
E. Mihailescu, On a class of stable conditional measures, Ergodic Th Dyn Systems, 31, 1499-1515, 2011.

\bibitem{M-MZ}
E. Mihailescu, Unstable directions and fractal dimension for skew products with overlaps in fibers, Math Zeitschrift, 269, 2011, 733-750.

\bibitem{MS}
E. Mihailescu, B. Stratmann, Upper estimates for stable dimensions on fractal sets with variable numbers of foldings, Int Math Res Notices, no. 23, (2014),  6474-6496.

\bibitem{MU-JSP2016}
E. Mihailescu, M. Urba\'nski, Overlap functions for measures in conformal iterated function systems, J Stat Phys, 162, 2016, 43-62.

\bibitem{Pa}
W. Parry, Entropy and Generators in Ergodic Theory, W.A.Benjamin, New York, 1969.

\bibitem{PSS}
Y. Peres, W. Schlag, B. Solomyak,  Sixty years of Bernoulli convolutions in fractal geometry and stochastics II. Progress in Probability, vol. 46, pp. 39–65. Birkhauser(2000).

\bibitem{Pe}
Y. Pesin, Dimension Theory in Dynamical Systems, Univ of Chicago Press, 1997.

\bibitem{PW} Y. Pesin, H. Weiss, On the dimension of deterministic and random Cantor-like sets, symbolic dynamics, and the Eckmann-Ruelle Conjecture, Commun. Math. Phys. 182 (1996) 105-153.

\bibitem{Ro} V. A. Rokhlin, Lectures on the theory of entropy of transformations with invariant measures, Russian Math. Surveys, 22, 1967, 1-54.

\bibitem{Ru-78}
D. Ruelle, Thermodynamic Formalism, Addison-Wesley, Reading, 1978.

\bibitem{Ru-fold}
D. Ruelle, Positivity of entropy production in nonequilibrium statistical mechanics, J. Stat Phys, 85, (1/2) (1996) 1--23.

\bibitem{Ru-survey}
D. Ruelle, Smooth dynamics and new theoretical ideas in nonequilibrium statistical mechanics, J. Stat Phys, 95, (1999), 393--468.

%\bibitem{Sh}
%P. Shmerkin, On the exceptional set for absolute continuity of Bernoulli convolutions, Geom. Funct. An. 24(3), (2014), 946-958.

\bibitem{Sh}
P. Shmerkin, Projections of self-similar and related fractals: a survey of recent developments. Fractal geometry and stochastics V, 53--74, Progr. Probab., 70, Birkhauser/Springer,  2015. 

%\bibitem{SS}
%P. Shmerkin, B.  Solomyak,  Absolute continuity of self-similar measures, their projections and convolutions. Trans Amer Math Soc 368 (2016), no. 7, 5125-5151.

\bibitem{Wa}
P. Walters, An Introduction to Ergodic Theory, 2nd ed. Springer, New York (2000).

\bibitem{Y}
L.S. Young, Dimension, entropy and Lyapunov exponents, Ergodic Th Dyn Systems, 2, 109-124, 1982.
\end{thebibliography}
\end{document}